\newtheorem{theorem}{Theorem}
\newtheorem{lemma}[theorem]{Lemma}
\newtheorem{remark}[theorem]{Remark}
\newtheorem{definition}[theorem]{Definition}
\newtheorem{example}[theorem]{Example}
\DeclareMathOperator{\Hes}{\rm Hes}
\DeclareMathOperator{\hes}{\rm hes}
\DeclareMathOperator{\Ric}{\rm Ric}
\DeclareMathOperator{\Id}{\rm Id}
\DeclareMathOperator{\coef}{\rm Coef}
\begin{document}

\title[Half conformally flat generalized quasi-Einstein manifolds]
{Half conformally flat\\ generalized quasi-Einstein manifolds}
\author{M. Brozos-V\'{a}zquez \, E. Garc\'{i}a-R\'{i}o\, P. Gilkey \, X. Valle-Regueiro}
\address{MBV: Universidade da Coru\~na, Differential Geometry and its Applications Research Group, Escola Polit\'ecnica Superior, 15403 Ferrol,  Spain}
\email{miguel.brozos.vazquez@udc.gal}
\address{EGR-XVR: Faculty of Mathematics,
University of Santiago de Compostela,
15782 Santiago de Compostela, Spain}
\email{eduardo.garcia.rio@usc.es; javier.valle@usc.es}
\address{PG: Mathematics Department, \; University of Oregon, \;\;
Eugene \; OR 97403, \; USA}
\email{gilkey@uoregon.edu}
\thanks{Supported by projects EM2014/009, MTM2013-41335-P with FEDER funds, and MTM2016-75897-P (Spain).}
\subjclass[2010]{53C21, 53B30, 53C24, 53C44}
\keywords{Generalized Quasi-Einstein manifold,
half conformally flat, Walker manifold, Riemannian extension, affine manifold}

\begin{abstract}
We provide classification results for and examples of half conformally flat generalized quasi Einstein manifolds of signature $(2,2)$. 
This analysis leads to a natural equation in affine geometry called the affine quasi-Einstein equation that we explore in further detail. 
\end{abstract}

\maketitle

\section{Introduction}

The analytical study of differential equations often focuses on the existence and uniqueness of solutions on a given domain.
From a geometric point of view, the converse question is also of interest. Given a differential equation, one may look for a 
manifold that supports a non-trivial solution and one might ask about the local/global geometry of the manifold, thus leading 
to an analytical characterization of a manifold structure by a differential equation if this manifold corresponds to a unique domain 
where the given equation has a non-trivial solution. Clearly one may not expect any positive answer for arbitrary equations but 
there are important examples when the equation has some geometrical/physical meaning. The equation of Obata \cite{obata} is a
typical example; see also the discussion in \cite{david-pontecorvo,obata2,ranjan}.

Let $\rho$ be the Ricci tensor of a pseudo-Riemannian manifold $\mathcal{M}=(M,g)$. If $f$ is a smooth function on $M$, let $\Hes_f$ be the
Hessian. Both $\rho$ and $\Hes_f$ are $(0,2)$-tensor fields on $M$; we refer to Section~\ref{S1.5} for a precise definition. The
generalized quasi-Einstein equation links these two objects with the metric tensor in a very natural
fashion. This single equation extends equations studied previously such as the equation of Obata \cite{obata}, 
the M\"obius equation \cite{xu}, the Einstein equation, and the gradient Ricci soliton equation as we shall see
in the discussion given below. In this paper, we examine the generalized quasi-Einstein equation (see Equation~(\ref{eq:general quasi-Einstein}) below)
in the setting of half conformally flat manifolds of signature $(2,2)$.

\begin{definition} \rm A quadruple $(M,g,f,\mu)$, where $(M,g)$ is a pseudo-Riemannian manifold of dimension $n$, $f$ is a smooth function on $M$,
and $\mu\in \mathbb{R}$, is said to be a \emph{generalized quasi-Einstein manifold} if the tensor ${\Hes}_f+\rho-\mu df\otimes df$
is a multiple of the metric, i.e. if the following equation (which is called the generalized quasi-Einstein equation) is satisfied:
\begin{equation}\label{eq:general quasi-Einstein}
{\Hes}_f+\rho-\mu df\otimes df=\lambda\, g\text{ for some }\lambda\in \mathcal{C}^\infty(M)\,.
\end{equation}\end{definition}

There are several interesting families of generalized quasi-Einstein manifolds that have been considered in the literature previously:

\begin{example}[\bf Einstein manifolds]\rm One can recover the Einstein equation by letting $f$ be constant
in Equation~\eqref{eq:general quasi-Einstein}. Consequently any Einstein manifold is in fact a generalized quasi-Einstein manifold. 
Suppose on the other hand that $\mathcal{M}$ is Einstein. We consider Equation~\eqref{eq:general quasi-Einstein} for $\mu\neq 0$.
The change of variable $h=e^{\mu f}$ provides the equivalent equation $\frac{1}{\mu h}\Hes_{h}+\rho=\lambda g$. 
Let $\tau$ be the scalar curvature; as $\mathcal{M}$ is Einstein, $\rho=\frac{\tau}{n} g$. Multiplying by $\mu h$ converts
the relation $\frac{1}{\mu h}\Hes_{h}+\rho=\lambda g$ into the equation
\begin{equation}\label{eq:Moebius}
\textstyle\Hes_{h}+\mu h(\frac{\tau}{n}-\lambda)g=0\,.
\end{equation}
This is precisely the Equation of M\"obius, where $\Delta h=\mu (\tau-n \lambda) h$ (see, for example, \cite{xu}). 
Moreover, if $\lambda$ is constant, then Equation~\eqref{eq:Moebius} resembles the equation of Obata $\Hes_{h}+\kappa h g=0$
since $\kappa=\mu(\frac{\tau}{n}-\lambda)$ is a constant (see \cite{obata}).\end{example}

\begin{example}[\bf Gradient Ricci almost solitons]\rm\label{ex:3} For $\mu=0$, Equation~\eqref{eq:general quasi-Einstein}
corresponds to the gradient Ricci almost soliton equation (see, for example, \cite{BBR--2012,Brozos-Garcia-Valle,PRRS}).
In particular, if $\lambda$ is constant, then one obtains the gradient Ricci soliton equation (see 
\cite{cao-chen1,cao-wang-zhang,chen-wang,munteanu-sesum} and references therein), which identifies 
self-similar solutions of the Ricci flow: $\frac{\partial}{\partial t}g(t)=-2\rho(t)$. Although gradient Ricci solitons are a 
special case of quasi-Einstein metrics, they exhibit quite different properties (see \cite{Case1}).
We emphasize that the gradient Ricci almost soliton equation is not just a formal generalization of the Ricci soliton equation, 
but includes families of self-similar solutions of other geometric flows such as the \emph{Ricci-Bourguignon flow} \cite{CCDMM}.
This flow is defined for a $\kappa\in\mathbb{R}$  by the evolution equation
$\partial_tg(t)=-2(\rho(t)-\kappa\tau(t)\, g(t))$. The self-similar solutions of this flow are gradient Ricci almost solitons with soliton function 
$\lambda=\kappa\,\tau+\nu$ (for some $\nu\in\mathbb{R}$) and are called \emph{$\kappa$-Einstein solitons}
(see  \cite{CMMR, CaMa} for further details).\end{example}

\begin{example}[\bf Conformally Einstein manifolds]\label{subsubsect:conf-Einstein}\rm For $n\geq 3$, $(M,g, f, -\frac{1}{n-2})$ is a generalized quasi-Einstein manifold if and only if $(M,e^{-\frac{2}{n-2}f}g)$ is Einstein. Consequently, the parameter $\mu=-\frac{1}{n-2}$ is a distinguished value which is often exceptional, see Theorem~\ref{th:non-isotropic}, Theorem~\ref{th:isotropic} and 
Example~\ref{remark:conf-Einstein} for example. We refer to \cite{brinkmann, gover-nurowski} for more detailed information on conformally Einstein manifolds.\end{example}
 
\begin{example}[\bf Static space-times] \rm For $\mu=1$, the change of variable $h=e^{-f}$ transforms Equation~\eqref{eq:general quasi-Einstein} 
into the equation $\Hes_h- h \rho=- h \lambda g$. If $\lambda=-\frac{\Delta h}{h}$, then this equation becomes
$\Hes_h-h\rho=\Delta h g$. This is the defining equation of the so-called \emph{static manifolds}
 that arise in the study of static space-times (we refer to \cite{Kobayashi,Kobayashi-obata} for further details).
 \end{example}

\begin{example}[\bf Quasi-Einstein manifolds and Einstein warped products]\rm A solution  Equation~\eqref{eq:general quasi-Einstein}
with $\lambda$ constant is said to be {\it quasi-Einstein} and the resulting equation is called the {\it quasi-Einstein equation}. 
Let $B\times_\varphi F$ be an Einstein warped product with
$\operatorname{dim} F=r$ and warping function $\varphi=e^{-\frac{f}{r}}$. The structure on the base  $\left(B, g, f, \frac{1}{r}\right)$ is  then quasi-Einstein.
Conversely, starting with a quasi-Einstein manifold $\left(B, g, f, \mu\right)$ where $\mu=\frac{1}{r}$ for $r$ a positive integer, 
there exist appropriate Einstein fibers $F$ so that $B\times_\varphi F$ is Einstein, see, for example, \cite{KimKim}.\end{example}

\begin{remark}\rm Even more general classes exist in the literature \cite{Catino12,chen-liang-zhu,GJS,neto}.\end{remark}

\subsection{Motivation}
Equation~\eqref{eq:general quasi-Einstein} provides information on the curvature of the manifold since it involves the associated Ricci tensor. 
We shall impose various conditions on the Weyl tensor to obtain related families of generalized quasi-Einstein manifolds. 
One could assume, for example, that $\mathcal{M}$ is locally conformally flat; this condition turns out to be quite restrictive. We refer, for example,
to the discussion in  \cite{cao-chen1,cao-wang-zhang} 
in relation to gradient Ricci solitons and to the discussion in \cite{LCFLorentzianQE,Catino13} for quasi-Einstein manifolds. Other weaker conditions
were considered in \cite{Catino12} for a slightly more general class of manifolds than the one we consider here. Suppose that $\mu$ is not
assumed to be constant. It is known that  $4$-dimensional generalized quasi-Einstein manifolds with harmonic
Weyl tensor and zero radial Weyl curvature are indeed locally conformally flat in Riemannian signature.
Associated rigidity results are available. See, for example,
\cite{BR-2014,Case1,Hu-Li-Zhai} and the references therein.

Other natural conditions on the conformal curvature were previously considered for $4$-dimensional manifolds and particular families of generalized
quasi-Einstein manifolds.  One says that $\mathcal{M}=(M,g)$ is {\it half conformally flat} if $\mathcal{M}$ is either self-dual or anti-self-dual.
The notation is chosen to avoid
specifying the orientation. One has that half conformally flat quasi-Einstein manifolds are locally conformally flat in the
Riemannian setting \cite{CatinoDGA,deng}. We refer to \cite{chen-wang} for the gradient Ricci soliton case and to \cite{neto} for related work.

The key point in this analysis is that, in definite signature, the level hypersurfaces of the potential function are non-degenerate and have constant sectional
curvature. However, this need no longer hold true if the signature is indefinite. In this setting, the metric may be degenerate on the level hypersurfaces of the
potential function. This gives rise to null parallel distributions (Walker structures) and to examples which are not locally
conformally flat (see \cite{MeE,Brozos-Garcia-Valle}).

In this paper, we shall examine $4$-dimensional generalized quasi-Einstein manifolds in neutral signature $(2,2)$.
We wish to find examples which are half conformally flat, but not locally conformally flat.  The analysis depends to a large extent
on the nature of the vector field $\nabla f$. If $\|\nabla f\|\neq 0$, then $\mathcal{M}$ is said to be {\it non-isotropic} while if $\|\nabla f\|=0$ but $\nabla f\ne0$,
then $\mathcal{M}$ is said to be {\it isotropic}. We shall see that solutions of Equation~\eqref{eq:general quasi-Einstein} in the non-isotropic setting
behave very much like solutions of Equation~\eqref{eq:general quasi-Einstein} in  Riemannian signature. The isotropic setting has genuinely new
phenomena not present in the Riemannian setting and Walker structures play a fundamental role. We are interested in the local theory and can
restrict to an arbitrarily small open neighborhood $\mathcal{O}$ of the point $P$ of $M$ in question. We shall assume $\nabla f$ does not vanish
on $\mathcal{O}$. We shall also assume either $\|\nabla f\|$ never vanishes on $\mathcal{O}$
or that $\|\nabla f\|$ vanishes identically on $\mathcal{O}$. We shall not treat the mixed case
where the type of $\nabla f$ changes.

\subsection{Walker manifolds}\label{sect:Walker}
We now summarize the basic facts we shall need about Walker geometry and introduce some important families of Walker manifolds.
Following the seminal work of Walker \cite{Walker} (see also \cite{DeR}), a pseudo-Riemannian manifold $\mathcal{M}=(M,g_{\mathcal{W}})$ is said to
be a \emph{Walker manifold} if $\mathcal{M}$ admits a null parallel distribution $\mathfrak{D}$. We shall work in signature $(2,2)$ and
assume that $\mathfrak{D}$ is 2-dimensional. There are then the canonical local coordinates $(x^1,x^2,x_{1'},x_{2'})$ of Walker. 
To simplify the notation, let $\partial_{x^i}:=\frac{\partial}{\partial x^i}$ and $\partial_{x_{i'}}:=\frac{\partial}{\partial x_{i'}}$ for $i=1,2$. 
Let $\circ$ denote the symmetric product. We adopt the {\it Einstein convention} and sum over repeated indices. There are smooth functions
$a_{ij}=a_{ij}(x^1,x^2,x_{1^\prime},x_{2^\prime})$ which are defined locally on $M$ so that the metric
 $g_\mathcal{W}$ and the distribution $\mathfrak{D}$ take the form
\begin{equation}\label{eq:walker-coordinates}
g_\mathcal{W}= 2 dx^i\circ dx_{i'}+ a_{ij} dx^i\circ dx^j\text{ and }\mathfrak{D}=\operatorname{span}\{\partial_{x_{1'}},\partial_{ x_{2'}}\}\,.
\end{equation}
Any Walker manifold has a canonical orientation \cite{derd-book,derd} which is linked to the orientation of the null distribution $\mathfrak{D}$.
If $\star$ is the Hodge operator, we require that $\star\mathfrak{D}=\mathfrak{D}$ so $\mathfrak{D}$ is self-dual or, equivalently, 
$\star(d{x_{1'}}\wedge d{x_{2'}})=d{x_{1'}}\wedge d{x_{2'}}$. We fix this orientation henceforth.

Let $D$ be a torsion free connection on a surface $\Sigma$. If $(x^1,x^2)$ are local coordinates on $\Sigma$, let $(x_{1'},x_{2'})$ be the corresponding
dual coordinates on the cotangent bundle $T^*\Sigma$; if $\omega$ is a 1-form, we can express $\omega=x_{1'}dx^1+x_{2'}dx^2$. Let $\Phi$ be an 
auxiliary symmetric $(0,2)$-tensor field. Let $\Gamma_{ij}{}^k$ be the Christoffel symbols of the connection $D$.
The {\it deformed Riemannian extension} is defined by setting:
\begin{equation}\label{Eq4}
g_{D,\Phi}= 2 dx^i \circ dx_{i'} +\left\{-2x_{k'} {}^D\Gamma_{ij}{}^k+\Phi_{ij}\right\}dx^i \circ dx^j\,.
\end{equation}
This is an invariantly defined neutral signature metric on the cotangent bundle. Deformed Riemannian extensions were used in \cite{MeE} to describe
self-dual gradient Ricci solitons which are not locally conformally flat. 
More generally, let $T=(T_i^j)$ and $S=(S_i^j)$ be endomorphisms of the tangent
bundle of $\Sigma$. The {\it modified Riemannian extension} is defined \cite{CLGRGVL} by setting:
\begin{equation}\label{eq:mre}
g_{D,\Phi,T,S}= 2 dx^i \circ dx_{i'} +\left\{\frac{1}{2} x_{r'} x_{s'}(T^r_i S^s_j +T^r_j S^s_i)-2x_{k'} {}^D\Gamma_{ij}^k+\Phi_{ij}\right\}dx^i \circ dx^j\,.
\end{equation}
Modified Riemannian extensions were used in \cite{CLGRGVL} to describe Walker manifolds which are self-dual.
This metric and other related metrics appear in many contexts; see, for example, \cite{afifi, MeE,walker-metrics,CLGRGVL}. 

Although it is possible to show directly that the metrics of Equation~(\ref{Eq4}) and of Equation~(\ref{eq:mre}) are defined invariantly,
it is worth introducing a coordinate free formalism as we
shall need the requisite notation subsequently in any event. Let $\pi:T^*\Sigma\rightarrow\Sigma$ be the natural projection.
The geometries of the affine surface $\Sigma$ and of the cotangent bundle $T^\ast \Sigma$ are linked through
\emph{evaluation maps} and \emph{complete lifts}. Given a vector field $X$ on $\Sigma$,  the \emph{evaluation map} 
$\iota X$ is the  function on $T^*\Sigma$ which is characterized by the identity
$$(\iota X)(p,\omega):=\omega_p(X(p))\text{ for }(p,\omega)\in T^\ast \Sigma\,.
$$
Vector fields on $T^*\Sigma$ are determined 
by their action on evaluation maps (see \cite{YI}). For a vector field $X$ on $\Sigma$, the \emph{complete lift} of $X$, which is denoted by $X^C$,
is the vector field on $T^*\Sigma$ that satisfies $X^C(\iota Z)=\iota [X,Z]$ for any vector field $Z$ on $\Sigma$.
The deformed Riemannian extension
of Equation~(\ref{Eq4}) is characterized invariantly by its action on complete lifts:
\[ 
g_{D,\Phi}(X^C,Y^C)=-\iota(D_XY+D_YX)+\Phi(X,Y)\,.
\]
Similarly, if $T=(T_i^j)$ is an endomorphism of $T\Sigma$, then
the evaluation $\iota T$ is a $1$-form on $T^*\Sigma$ which is characterized by the property $(\iota T)(X^C)=\iota (T(X))$.
The  metric of Equation~(\ref{eq:mre}) is given invariantly by the equation:
\begin{equation}\label{eq:metrictensorofriemannextensions}
g_{D,\Phi,T,S} = \iota T \circ \iota S +g_{D,\Phi}\,.
\end{equation}

\subsection{Main results}
Let $\mathcal{M}$ be a 4-dimensional half conformally flat generalized quasi-Einstein manifold. If $\mathcal{M}$ is Riemannian,
under fairly mild assumptions, one can show that $\mathcal{M}$ is locally
conformally flat; see, for example, the discussion in \cite{CatinoDGA,Catino12,chen-wang}. By contrast, in the signature $(2,2)$ setting, there are examples which are half
conformally flat, but not locally conformally flat (see Remark~\ref{R11} below). We work purely locally and shall replace the
original manifold by an arbitrarily small neighborhood of the point in question.
As noted above, we shall either assume that $\|\nabla f\|\ne0$ or that $\|\nabla f\|$ vanishes identically but $\nabla f\ne0$; we
shall not consider the ``mixed" case since we are especially interested in describing self-dual generalized quasi-Einstein metrics that are not locally conformally flat. We shall establish the following results in Section~\ref{S2} and in Section~\ref{S3}, respectively.

\begin{theorem}\label{th:non-isotropic}
Let $(M,g,f,\mu)$ be a half conformally flat generalized quasi-Einstein manifold of signature $(2,2)$ with $\mu\neq -\frac{1}{2}$ and 
$\|\nabla f\|\neq 0$. Then $(M,g)$ is conformally flat and is locally isometric to a warped product of the form $I\times_\varphi N$, where
$I\subset \mathbb{R}$ and $N$ is of constant sectional curvature.
\end{theorem}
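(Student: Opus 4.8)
The plan is to exploit that, since $\|\nabla f\|\neq0$, the vector field $N:=\nabla f/\|\nabla f\|$ is non-null, say $g(N,N)=\varepsilon=\pm1$, so that the level sets $\Sigma_c:=f^{-1}(c)$ are non-degenerate hypersurfaces. I would work in a pseudo-orthonormal frame $\{e_0=N,e_1,e_2,e_3\}$ with $e_1,e_2,e_3$ tangent to $\Sigma_c$ and read off the three blocks of Equation~\eqref{eq:general quasi-Einstein}. Since $df$ is supported on $e_0$, the tangential block gives $\Hes_f|_{\Sigma_c}=\lambda\,g|_{\Sigma_c}-\rho|_{\Sigma_c}$, which up to the factor $\|\nabla f\|$ is the second fundamental form of $\Sigma_c$; the mixed block reduces to $\rho(N,e_i)=-\varepsilon\,e_i(\|\nabla f\|)$; and the normal block relates $N(\|\nabla f\|)$, $\rho(N,N)$, $\lambda$ and $\mu$. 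The strategy is to prove first that $(M,g)$ is conformally flat and then to assemble these data into the warped product.

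The conformal flatness is the heart of the matter, and it is where the hypothesis $\mu\neq-\tfrac12$ intervenes: in dimension four the exceptional value $-\tfrac1{n-2}=-\tfrac12$ is exactly the conformally Einstein parameter of Example~\ref{subsubsect:conf-Einstein}, for which self-dual Einstein (hence half conformally flat but not conformally flat) metrics already provide obstructions. I would derive the integrability condition for Equation~\eqref{eq:general quasi-Einstein} by differentiating it, commuting covariant derivatives, and substituting the decomposition of the curvature tensor into its Weyl and Schouten parts. This expresses the Cotton tensor $C$ as a multiple of the contraction $\iota_{\nabla f}W$ of the Weyl tensor, the multiple being a nonzero function precisely when $\mu\neq-\tfrac12$. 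Half conformal flatness, say $W=W^{+}$, forces $W^{-}=0$, hence $\delta W^{-}=0$, so the corresponding duality half of the Cotton tensor vanishes; fed back into the identity this yields $(\iota_{\nabla f}W)^{-}=0$.

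The main obstacle is to pass from this one-sided vanishing to $W=0$, and here the non-nullity of $\nabla f$ is decisive. Relative to the non-null direction $N$, the Weyl tensor is encoded by its electric part $E_{ij}=W_{0i0j}$ and a magnetic part $B_{ij}$, and the blocks $W^{\pm}$ are governed by $E\pm B$; in signature $(2,2)$ the Hodge operator still satisfies $\star^{2}=\mathrm{Id}$ on $2$-forms, so this decomposition is real and three-plus-three dimensional just as in the Riemannian case, although the induced metric on $\Lambda^{2}_{\pm}$ is now indefinite. The content of the previous step is that the linear map $W^{+}\mapsto(\iota_{N}W^{+})^{-}$ has trivial kernel when $N$ is non-null, so $(\iota_{\nabla f}W)^{-}=0$ together with $W^{-}=0$ forces $W^{+}=0$ and hence $W=0$. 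Verifying this injectivity, by carefully matching the frame components $W_{0ijk}$ and $W_{ijkl}$ against the para-Hermitian structure of $\Lambda^{2}_{\pm}$ in neutral signature and checking that no magnetic part can survive, is the delicate bookkeeping I expect to be the crux. It is also exactly what fails when $\nabla f$ is null: there the kernel is non-trivial, which is what leaves room for the isotropic, non-conformally-flat examples treated in Section~\ref{S3}.

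Once $W=0$, the remaining statements follow along the lines of the Riemannian theory, now legitimate because the level sets are non-degenerate. With the full curvature reduced to its Schouten part, the tangential block together with the vanishing of the electric part forces the second fundamental form of each $\Sigma_c$ to be pure trace, so the level sets are totally umbilical; the mixed block and the Codazzi equation then show that $\|\nabla f\|$ is constant on each $\Sigma_c$, that is, a function of $f$ alone, and the Gauss equation combined with $\rho|_{\Sigma_c}\propto g|_{\Sigma_c}$ identifies the induced metric on $\Sigma_c$ as one of constant sectional curvature. Reparametrizing the flow of $N$ by arclength finally assembles these ingredients into a warped product $I\times_\varphi N$ with $N$ of constant sectional curvature, as claimed.
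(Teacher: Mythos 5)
Your proposal fails at the step you yourself single out as the crux, and the failure is structural rather than a matter of bookkeeping. First, the integrability identity does not have the form you claim. Differentiating Equation~\eqref{eq:general quasi-Einstein} and decomposing the curvature gives (Assertion~(5) of Lemma~\ref{lemma:formulae}, with $n=4$ and $\eta=2\mu+1$)
\[
W(X,Y,Z,\nabla f)=-C(X,Y,Z)+\eta\,\{\text{terms built from }\tau,\ \rho,\ df,\ g\},
\]
so $C$ equals a multiple of $\iota_{\nabla f}W$ precisely when $\mu=-\tfrac12$ (that is, $\eta=0$), and for $\mu\neq-\tfrac12$ it differs from it by the $\eta$-weighted Ricci terms: you have the role of the hypothesis $\mu\neq-\tfrac12$ reversed, and the terms you drop are exactly where the usable information sits. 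Second, and decisively, the linear map $W^{+}\mapsto(\iota_{N}W^{+})^{-}$ is not injective for non-null $N$; it is the zero map for \emph{every} vector $N$, null or not. Since $\Lambda^{+}\perp\Lambda^{-}$ and the induced metric on $\Lambda^{\pm}$ is non-degenerate in signature $(2,2)$, one can write $W^{+}=\sum_{i,j}c_{ij}\,\omega_i^{+}\otimes\omega_j^{+}$ in a basis $\{\omega_i^{+}\}$ of $\Lambda^{+}$, whence $W^{+}(\cdot,\cdot,Z,N)=\sum_{i}\bigl(\sum_{j}c_{ij}\,\omega_j^{+}(Z,N)\bigr)\,\omega_i^{+}$ is a self-dual $2$-form and its anti-self-dual part vanishes identically. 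Hence $(\iota_{\nabla f}W)^{-}=0$ is a tautology once $W^{-}=0$ and forces nothing; were your injectivity claim true, every self-dual $4$-manifold carrying a non-null vector field would be conformally flat, which is absurd. Since conformal flatness is the first step of your plan and everything afterwards rests on $W=0$, the proof collapses; note also that your proposed explanation of the isotropic case --- a non-trivial kernel for null $N$ --- evaporates for the same reason.

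The repair is essentially the paper's proof, whose logical order is the reverse of yours. When $W=W^{+}$, the Cotton tensor $C=-2\operatorname{div}_4W$ satisfies the same self-duality relation \eqref{eq:self-dual} as $W$ in its first two indices; substituting Assertion~(5) of Lemma~\ref{lemma:formulae} into that relation therefore cancels both the Weyl and the Cotton contributions and leaves Equation~(\ref{E10}), a purely algebraic constraint on the Ricci tensor carrying the overall factor $\eta\neq0$. Choosing $E_1$ parallel to $\nabla f$ (this is where $\|\nabla f\|\neq 0$ enters), Equation~(\ref{E10}) forces $\rho$ to be diagonal in this frame with $3\varepsilon_i\rho(E_i,E_i)=\tau-\varepsilon_1\rho(E_1,E_1)$ for $i\ge2$. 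The quasi-Einstein equation then makes the level sets of $f$ totally umbilical, the vanishing of the mixed Ricci terms upgrades the resulting twisted product to a warped product $I\times_\varphi N$, and conformal flatness is obtained \emph{last}: a self-dual warped product of this form is automatically locally conformally flat with fiber of constant sectional curvature \cite{Brozos-Garcia-Valle}. So the structure theorem comes first and $W=0$ is its consequence, not its premise.
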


\begin{theorem}\label{th:walker}
Let $(M,g,f,\mu)$ be a half conformally flat generalized quasi-Einstein manifold of signature $(2,2)$ with $\mu\neq -\frac{1}{2}$, with $\nabla f\ne0$,
and with $\|\nabla f\|= 0$. Then $(M,g)$ is a Walker manifold with a $2$-dimensional null parallel distribution so the metric $g$ has the form
of Equation~(\ref{eq:walker-coordinates}) in some suitable system of local coordinates.
\end{theorem}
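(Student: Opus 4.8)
The plan is to produce a $2$-dimensional totally isotropic parallel distribution $\mathfrak{D}$; once this is in hand, Walker's theorem furnishes local coordinates in which $g$ takes the form of Equation~(\ref{eq:walker-coordinates}), so the entire content of the statement is the construction of $\mathfrak{D}$.

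First I would record the algebraic consequences of the isotropic hypothesis. Since $\|\nabla f\|^2\equiv 0$, differentiating gives $0=X\|\nabla f\|^2=2\,\Hes_f(X,\nabla f)$ for every $X$, so $\nabla_X\nabla f\perp\nabla f$ for all $X$. Feeding $\Hes_f(\nabla f,\cdot)=0$ into Equation~(\ref{eq:general quasi-Einstein}) and using $df(\nabla f)=\|\nabla f\|^2=0$ yields $\rho(\nabla f,\cdot)=\lambda\,df$; that is, $\nabla f$ is a null eigenvector of the Ricci operator with eigenvalue $\lambda$. At each point the null vector $\nabla f$ is then contained in exactly two totally isotropic $2$-planes, distinguished by whether the associated decomposable null $2$-form is self-dual or anti-self-dual, and the goal is to show that one of them is parallel, equivalently that the null simple $2$-form $\Omega$ with $\mathfrak{D}=\ker\Omega$ is recurrent, i.e.\ $\nabla_X\Omega=\theta(X)\,\Omega$ for some $1$-form $\theta$; recurrence forces $\nabla_XY\in\mathfrak{D}$ whenever $Y\in\mathfrak{D}$.

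Then half conformal flatness enters. One half of the Weyl tensor vanishes, so on the corresponding summand $\Lambda^\pm$ the curvature operator is governed by $\rho$ and $\tau$ alone; I would choose $\mathfrak{D}$ to be the totally isotropic $2$-plane through $\nabla f$ whose defining null $2$-form is of that type. Using the generalized quasi-Einstein equation in the form $\nabla_X\nabla f=(\lambda\,g-\rho+\mu\,df\otimes df)(X,\cdot)^\sharp$, together with the integrability identity $R(X,Y)\nabla f=(\nabla_X\Hes_f)(Y,\cdot)^\sharp-(\nabla_Y\Hes_f)(X,\cdot)^\sharp$ and the second Bianchi identity, one expresses the curvature applied to $\nabla f$ in terms of first derivatives of $\lambda$, $\rho$ and $df$. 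Combining this with the vanishing of the relevant half of the Weyl tensor should pin down the covariant derivatives of a complementary null generator of $\mathfrak{D}$ and establish the recurrence of $\Omega$.

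The hard part will be precisely this last curvature computation: extracting, from the vanishing of half the Weyl tensor and the integrability condition for $\nabla f$, the fact that the chosen totally isotropic $2$-plane is $\nabla$-invariant. The value $\mu=-\tfrac12$ must be excluded here because it is exactly $-\tfrac{1}{n-2}$ for $n=4$, the conformally Einstein value of Example~\ref{subsubsect:conf-Einstein}, at which the coefficient controlling this step degenerates. Once $\mathfrak{D}$ is shown to be parallel, null, and of rank $2$, the manifold is Walker and Equation~(\ref{eq:walker-coordinates}) follows.
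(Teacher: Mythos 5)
Your opening moves do shadow the paper's proof: from $\|\nabla f\|=0$ you correctly extract $\Hes_f(\cdot,\nabla f)=0$ and hence $\Ric(\nabla f)=\lambda\nabla f$, you correctly note that $\nabla f$ lies in exactly two totally isotropic $2$-planes distinguished by duality type, and you correctly tie the exclusion of $\mu=-\frac12$ to the degeneration of the coefficient $\eta=2\mu+1$ in Lemma~\ref{lemma:formulae}~(5). But the proposal stops exactly where the theorem begins: the step you yourself label ``the hard part'' is the entire content of the result, and ``should pin down'' is not an argument. Moreover, identities for $R(\cdot,\cdot)\nabla f$ alone cannot close the gap. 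Writing the quasi-Einstein equation as $\hes_f=\lambda\operatorname{Id}-\Ric+\mu\, df\otimes\nabla f$ and using that a maximal isotropic plane satisfies $\mathfrak{D}^\perp=\mathfrak{D}$, one checks that $\mathfrak{D}=\operatorname{span}\{\nabla f,U\}$ (with $U$ a null generator complementary to $\nabla f$) is parallel \emph{if and only if} $U$ is itself a Ricci eigenvector with the same eigenvalue, $\Ric(U)=\lambda U$; this pointwise fact about the Ricci operator is what must be produced, and producing it is precisely what the paper does. In the null frame of Equation~(\ref{eq:null-basis}), the self-duality identities (\ref{eq:self-dual-basisB1})--(\ref{eq:self-dual-basisB3}) fed into Lemma~\ref{lemma:formulae}~(5), after dividing by $\eta\neq0$, give first $\lambda=\frac{\tau}{4}$ and then $\Ric(U)=\frac{\tau}{4}U$ (Lemma~\ref{lemma:isotropic}); Equation~(\ref{eq:general quasi-Einstein}) then yields $\hes_f(U)=0$, and the four orthogonality relations of Equation~(\ref{E18}) give $\nabla\mathfrak{D}\subset\mathfrak{D}$. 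Nothing in your sketch isolates this mechanism.

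There is also a concrete error in your selection rule: you propose to take $\mathfrak{D}$ to be the isotropic plane whose defining $2$-form has the duality type of the \emph{vanishing} Weyl half. With the paper's conventions this is the wrong plane. When $W^-=0$, the plane that becomes parallel is $\operatorname{span}\{\nabla f,U\}$, whose $2$-form $\nabla f^\flat\wedge U^\flat$ lies in $\Lambda^+$ --- consistent with the Walker orientation convention $\star\mathfrak{D}=\mathfrak{D}$ --- whereas the anti-self-dual plane $\operatorname{span}\{\nabla f,T\}$ is in general \emph{not} parallel: by Equation~(\ref{eq:ricci-matrix}) one has $\hes_f(T)=-c\,\nabla f-b\,U$, which is nonzero unless the off-diagonal Ricci entries vanish. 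This asymmetry between the two duality types is not an artifact of conventions; it is the same asymmetry recorded in Remark~\ref{remark:anti-self-dual-Walker}, where the anti-self-dual case is shown to behave quite differently. So even if you carried out the curvature computation you defer, as set up it would be aimed at proving parallelism of a distribution that is generically not parallel.
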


These two results are not sensitive to the choice of the orientation. However, as noted above, the Walker manifolds we shall be considering
come equipped with natural orientations.  Adopt the notation of Equation~(\ref{Eq4}) and of Equation~(\ref{eq:mre}).
We will establish the following result in Section~\ref{S4}.

\begin{theorem}\label{Thm10} Let $(\Sigma,D)$ be an affine surface, let $\hat f\in \mathcal{C}^\infty(\Sigma)$ and let $f=\pi^*\hat f$.
\begin{enumerate}
\item Let $\Phi$ be arbitrary. Suppose $\hat f$ satisfies
\begin{equation}\label{eq:QEE-affine}
\operatorname{Hes}^D_{\hat f}+2\rho^D_s-\mu d\hat f\otimes d\hat f=0\text{ for some }\mu\in\mathbb{R}\,.
\end{equation}
Then $(T^*\Sigma,g_{D,\Phi},f,\mu)$ is a
self-dual isotropic quasi-Einstein Walker manifold with $\lambda=0$.
\item Let $\Phi=\frac{ 2}{C}e^{\hat f}(\operatorname{Hes}^D_{\hat f}+2\rho^D_s-\mu d\hat f\otimes d\hat f)$
and let $T=C e^{-\hat f}\Id$ for $\mu\in \mathbb{R}$ and for $0\ne C\in \mathbb{R}$. 
Then $(T^*\Sigma,g_{D,\Phi,T,\operatorname{Id}},f,\mu)$ is a
self-dual isotropic generalized quasi-Einstein Walker manifold with $\lambda=\frac32 C e^{-f}$.
\end{enumerate}\end{theorem}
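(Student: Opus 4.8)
The plan is to verify the four defining properties in turn --- Walker, isotropic, self-dual, and (generalized) quasi-Einstein --- working throughout in the Walker coordinates $(x^1,x^2,x_{1'},x_{2'})$ of Equation~(\ref{eq:walker-coordinates}), in which both $g_{D,\Phi}$ and $g_{D,\Phi,T,\operatorname{Id}}$ are displayed explicitly. The Walker property is immediate, since each metric is already in the normal form of Equation~(\ref{eq:walker-coordinates}); thus the fiber distribution $\mathfrak{D}=\operatorname{span}\{\partial_{x_{1'}},\partial_{x_{2'}}\}$ is null and parallel. For isotropy I would use that $f=\pi^*\hat f$ is constant along the fibers, so $df$ has only $dx^i$-components; since the base-base block of the inverse metric vanishes and its base-fiber block is the identity, raising the index gives $\nabla f=(\partial_{x^j}\hat f)\,\partial_{x_{j'}}\in\mathfrak{D}$, whence $\|\nabla f\|=0$, while $\nabla f\neq0$ wherever $d\hat f\neq0$. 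Self-duality of both metrics, with the canonical orientation fixed in Section~\ref{sect:Walker}, is a structural feature of deformed and modified Riemannian extensions established in \cite{MeE,CLGRGVL}; it does not invoke Equation~(\ref{eq:QEE-affine}), the relevant point in (2) being that $S=\operatorname{Id}$ and $T$ is a multiple of $\operatorname{Id}$.

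For the quasi-Einstein equation in (1) I would compute the Levi-Civita Christoffel symbols of $g_{D,\Phi}$. The essential facts are that the purely horizontal symbols coincide with the affine Christoffel symbols ${}^D\Gamma_{ij}{}^k$, that $\tilde\Gamma^k_{ij'}=0$, and that every symbol with two fiber indices vanishes. Since $\partial_{x_{k'}}f=0$, these identities force the mixed and fiber blocks of $\operatorname{Hes}_f$ to vanish and its horizontal block to reduce to $\operatorname{Hes}^D_{\hat f}$. Combined with the classical fact that the Ricci tensor of a deformed Riemannian extension vanishes off the horizontal block, where it equals $2\rho^D_s$ independently of $\Phi$, and with $df\otimes df=\pi^*(d\hat f\otimes d\hat f)$, the tensor $\operatorname{Hes}_f+\rho-\mu\,df\otimes df$ is concentrated in the horizontal block, where it equals $\operatorname{Hes}^D_{\hat f}+2\rho^D_s-\mu\,d\hat f\otimes d\hat f$. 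This vanishes by hypothesis, so the left-hand side of Equation~(\ref{eq:general quasi-Einstein}) is zero and, as $g$ is nondegenerate, $\lambda=0$.

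For (2) the modification adds the term $Ce^{-\hat f}x_{i'}x_{j'}\,dx^i\circ dx^j$, which depends on the fiber coordinates and so genuinely alters the curvature. Recomputing the connection, the horizontal Christoffel symbols acquire the correction $-\tfrac{1}{2}Ce^{-\hat f}(\delta^k_ix_{j'}+\delta^k_jx_{i'})$, so the horizontal block of $\operatorname{Hes}_f$ gains a term $\tfrac{1}{2}Ce^{-\hat f}(x_{j'}\partial_{x^i}\hat f+x_{i'}\partial_{x^j}\hat f)$, while $\operatorname{Hes}_f(\partial_{x^i},\partial_{x_{j'}})=0$ and the fiber block still vanishes; in particular $\Delta f=0$. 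Because $\|\nabla f\|=0$ as well, taking the trace of Equation~(\ref{eq:general quasi-Einstein}) already pins down $\lambda=\tfrac{1}{4}\tau$. I would then compute the Ricci tensor of the modified extension and check that $\rho(\partial_{x_{i'}},\partial_{x_{j'}})=0$ and $\rho(\partial_{x^i},\partial_{x_{j'}})=\tfrac{3}{2}Ce^{-\hat f}\delta_{ij}$ --- equivalently $\tau=6Ce^{-\hat f}$ --- so that $\lambda=\tfrac{3}{2}Ce^{-\hat f}=\tfrac{3}{2}Ce^{-f}$; with these values the fiber-fiber and mixed components of Equation~(\ref{eq:general quasi-Einstein}) hold automatically.

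The main obstacle is the remaining horizontal component. Writing $E_{ij}=(\operatorname{Hes}^D_{\hat f}+2\rho^D_s-\mu\,d\hat f\otimes d\hat f)_{ij}$, the normalization $\Phi=\tfrac{2}{C}e^{\hat f}E$ makes the $\Phi$-contribution to $\lambda g$ equal to $3E_{ij}$, whereas the non-modification part of $\operatorname{Hes}_f+\rho-\mu\,df\otimes df$ contributes only $E_{ij}$; the deficit $2E_{ij}$, together with the terms linear and quadratic in the fiber coordinates coming from $\lambda a_{ij}$ and from the Hessian correction above, must be supplied exactly by the modification correction to the horizontal Ricci tensor. Verifying this identity is the crux of the proof: it requires the explicit curvature formula for modified Riemannian extensions from \cite{CLGRGVL} and careful bookkeeping of the $e^{\pm\hat f}$ factors and of the $x_{k'}$-linear terms. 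The chosen normalizations of $\Phi$, of $T$, and of $\lambda$ are precisely those that make this balance, which is what singles out the stated constants.
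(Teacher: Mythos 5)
Your overall strategy coincides with the paper's: both arguments work in the Walker coordinates of Equation~(\ref{eq:walker-coordinates}), take self-duality of $g_{D,\Phi}$ and $g_{D,\Phi,T,\operatorname{Id}}$ as a structural fact from \cite{MeE,CLGRGVL}, observe that $f=\pi^*\hat f$ forces $\nabla f$ into the parallel null distribution, and reduce Equation~(\ref{eq:general quasi-Einstein}) to a block-by-block coordinate computation in which only the horizontal block carries content. Your treatment of Assertion~(1) is complete and is exactly the paper's argument, and your intermediate claims for Assertion~(2) --- $\Delta f=0$, hence $\lambda=\tfrac{\tau}{4}$ from the trace, and $\tau=6Ce^{-\hat f}$ so $\lambda=\tfrac32Ce^{-f}$ --- agree with what the paper obtains via Lemma~\ref{lemma:isotropic} and the scalar curvature formula appearing in the proof of Lemma~\ref{lemma:self-dual-Walker}.

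The gap is in the final step of Assertion~(2). You correctly isolate the horizontal component as the crux, but you do not verify it; instead you assert that ``the chosen normalizations of $\Phi$, of $T$, and of $\lambda$ are precisely those that make this balance.'' As written this is circular, and it also mislocates the difficulty: the terms linear and quadratic in the fiber coordinates $x_{k'}$ (coming from the Hessian correction, from the Ricci correction of the modified extension, and from $\lambda a_{ij}$) must cancel identically, and no choice of the constants $C$ and $\mu$ can force such a cancellation --- it is a structural identity of the metric $g_{D,\Phi,T,\operatorname{Id}}$ with $T=Ce^{-\hat f}\Id$ that has to be checked against the curvature formulas of \cite{CLGRGVL}. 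Only after that cancellation does the fiber-independent remainder take the form in which the normalization of $\Phi$ matters. The paper records the outcome of exactly this computation as the identity
$$(\Hes_f+\rho-\mu\, df\otimes df-\lambda g)(\partial_{x^{i}},\partial_{x^{j}})=\textstyle\frac{C}2 e^{-\hat f}\,\Phi_{ij}-(\operatorname{Hes}^D_{\hat f}+2\rho^D_s-\mu\, d\hat f\otimes d\hat f)(\partial_{x^{i}},\partial_{x^{j}}),$$
whose right-hand side vanishes precisely because $\Phi=\frac{2}{C}e^{\hat f}(\operatorname{Hes}^D_{\hat f}+2\rho^D_s-\mu\, d\hat f\otimes d\hat f)$. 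To close your proof you must either carry out this computation or quote it; the balance cannot be inferred from the way the constants were chosen.
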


\begin{remark}\label{R11}\rm
The manifolds described in Assertion~(1) are not locally conformally flat in general. If $(\Sigma,D)$ is not projectively flat (see Definition~\ref{D21}),
then the deformed Riemannian extension  $(T^*\Sigma,g_{D,\Phi})$ is not locally conformally flat. However even if $(\Sigma,D)$ is projectively flat, 
one can choose
$\Phi$ so that $(T^*\Sigma,g_{D,\Phi})$ is not locally conformally flat. The metrics of Assertion~(2) are self-dual but never anti-self-dual.
And if $\hat f$ is non-constant, then $\lambda$ is non-constant so $(T^*\Sigma,g_{D,\Phi,T,\operatorname{Id}},f,\mu)$ is not quasi-Einstein.
Anti-self-dual modified Riemannian extensions  $(T^*\Sigma,g_{D,\Phi,T,\operatorname{Id}})$ have zero scalar curvature. This does not happen for
these examples since $\lambda=\tau/4$, as we will see in Section~\ref{S4}. Moreover, notice that these manifolds are $\frac14$-Einstein solitons if $\mu=0$ (see Example~\ref{ex:3}).
\end{remark}

The following result is a partial converse to Theorem~\ref{Thm10} and describes the possible local forms of
self-dual isotropic generalized quasi-Einstein metrics. 

\begin{theorem}\label{th:isotropic}
Let $(M,g,f,\mu)$ be a self-dual generalized quasi-Einstein  manifold of signature $(2,2)$ with $\mu\neq -\frac{1}{2}$ and $\|\nabla f\|=0$
which is not Ricci flat.  
\begin{enumerate}
\item If $\lambda$ is constant, then $\lambda=0$ and $(M,g,f,\mu)$ is locally isometric to a manifold which has the form given in
Assertion~(1) of Theorem~\ref{Thm10}.
\item if $\lambda$ is non-constant, then $(M,g,f,\mu)$ is locally isometric to a manifold which has the form given in Assertion~(2) of
Theorem~\ref{Thm10}.
\end{enumerate}
\end{theorem}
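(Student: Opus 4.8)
The plan is to start from the Walker normal form and recover affine data on a surface. Since a self-dual manifold is in particular half conformally flat, Theorem~\ref{th:walker} applies and furnishes local coordinates $(x^1,x^2,x_{1'},x_{2'})$ in which $g$ has the form of Equation~\eqref{eq:walker-coordinates}, the self-dual orientation being the canonical one attached to the null parallel distribution $\mathfrak{D}=\operatorname{span}\{\partial_{x_{1'}},\partial_{x_{2'}}\}$. The first substantive step is to show that the potential descends to the base, i.e. $f=\pi^*\hat f$ for some $\hat f\in\mathcal{C}^\infty(\Sigma)$. I would do this by combining two facts: since $\|\nabla f\|=0$ one has $\operatorname{Hes}_f(\nabla f,\cdot)=\tfrac12\,d\|\nabla f\|^2=0$, and contracting Equation~\eqref{eq:general quasi-Einstein} with $\nabla f$ then gives $\rho(\nabla f,\cdot)=\lambda\,df$, so $\nabla f$ is a null Ricci eigenvector. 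Feeding the explicit Ricci tensor of a self-dual Walker metric into this relation should force the fibre derivatives $\partial_{x_{i'}}f$ to vanish, which is exactly the statement $\nabla f\in\mathfrak{D}$, equivalently $f=\pi^*\hat f$.

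The second step identifies the metric as a modified Riemannian extension. Using the classification of self-dual Walker metrics in the canonical orientation (cf. \cite{CLGRGVL}), the coefficients $a_{ij}$ must take the form appearing in Equation~\eqref{eq:mre}, so $g=g_{D,\Phi,T,S}$ for a torsion-free connection $D$ on $\Sigma$ (recovered from the part of $a_{ij}$ quadratic in $x_{k'}$), a symmetric tensor $\Phi$, and endomorphisms $T,S$. Because $f=\pi^*\hat f$ has already been arranged, the lift formulas give $df\otimes df=\pi^*(d\hat f\otimes d\hat f)$ and, since the Levi-Civita connection of $g_{D,\Phi,T,S}$ restricts to $D$ on horizontal directions, $(\operatorname{Hes}_f)_{ij}=(\operatorname{Hes}^D_{\hat f})_{ij}$; combined with the known expression for the Ricci tensor of these extensions, whose base-base block is $2\rho^D_s$, this is the source of the factor $2$ in Equation~\eqref{eq:QEE-affine}.

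The third step is to decompose Equation~\eqref{eq:general quasi-Einstein} block by block in the Walker frame. The fibre-fibre and mixed blocks carry no affine Hessian and serve to determine $\lambda$ together with the endomorphisms: I expect the mixed $dx^i\circ dx_{j'}$ block to express $\lambda$ through the trace data of $T$ and $S$, normalising $S=\operatorname{Id}$ and pinning $\lambda=\tfrac32 Ce^{-f}$ with $T=Ce^{-\hat f}\operatorname{Id}$ in the non-degenerate case, while the base-base block then collapses to precisely the affine quasi-Einstein equation $\operatorname{Hes}^D_{\hat f}+2\rho^D_s-\mu\,d\hat f\otimes d\hat f=0$ and fixes $\Phi$ as the multiple of this tensor prescribed in Assertion~(2) of Theorem~\ref{Thm10}. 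The hypothesis $\mu\neq-\tfrac12$ is used here to keep this reduction non-degenerate, and the assumption that $\mathcal{M}$ is not Ricci flat guarantees that $\rho^D_s$ (equivalently $\Phi$) does not vanish identically, so the recovered affine structure is genuine.

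Finally I would split on $\lambda$. If $\lambda$ is constant, then the relation $\lambda=\tfrac32 Ce^{-f}$ with $\nabla f\neq0$ forces $C=0$, hence $\lambda=0$; the modification term drops out, $g$ becomes the deformed Riemannian extension $g_{D,\Phi}$, and we land in Assertion~(1) of Theorem~\ref{Thm10}. If $\lambda$ is non-constant, then $C\neq0$ and the structure is exactly the modified Riemannian extension of Assertion~(2). I expect the main obstacle to be the first step, proving that $f$ is a pullback from the base, since this requires controlling the full Ricci tensor of a self-dual Walker metric rather than merely its trace, and it is the step on which the clean reduction to the affine quasi-Einstein equation rests; the subsequent block computations, though lengthy, are essentially bookkeeping once $f=\pi^*\hat f$ and the extension form of $g$ are in hand.
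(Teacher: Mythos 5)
Your skeleton (Walker form, show $f=\pi^*\hat f$, identify $g$ as a modified Riemannian extension, decompose Equation~\eqref{eq:general quasi-Einstein} into blocks, split on whether $\lambda$ is constant) is the same as the paper's, which packages the first two steps into Lemma~\ref{lemma:self-dual-Walker}. But the step you yourself flag as the crux --- proving $f=\pi^*\hat f$ --- is not established by the mechanism you propose, and that mechanism cannot work. The relation $\Ric(\nabla f)=\lambda\nabla f$ is only a single contraction of Equation~\eqref{eq:general quasi-Einstein} and is far too weak to force $\nabla f\in\mathfrak{D}$: there exist self-dual Walker metrics of signature $(2,2)$ that are Einstein but not Ricci flat (modified Riemannian extensions with $T=\operatorname{Id}$ and $\Phi$ suitably chosen, cf.~\cite{CLGRGVL}), and for these \emph{every} null vector is a Ricci eigenvector, so feeding the explicit Ricci tensor into the eigenvector relation yields no information whatsoever about the fibre derivatives $\partial_{x_{i'}}f$. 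A related red flag is that your argument never uses the non-Ricci-flat hypothesis in an essential way (you invoke it only to say $\rho^D_s\not\equiv 0$ at the end), yet that hypothesis is exactly what makes the pullback claim true: on the flat Walker metric $g=2\,dx^i\circ dx_{i'}$ the function $f=x_{1'}$ satisfies Equation~\eqref{eq:general quasi-Einstein} (with $\mu=0$, $\lambda=0$), is isotropic, and has $\nabla f=\partial_{x^1}\notin\mathfrak{D}$, so no argument that ignores Ricci flatness can prove $\partial_{x_{i'}}f=0$. The paper's proof of Lemma~\ref{lemma:self-dual-Walker} uses the full tensor equation: after the substitution $h=e^{-\mu f}$, the fibre-fibre block gives $\partial^2_{x_{i'}x_{j'}}h=0$, hence $h=\iota\xi+\pi^*\hat h$ for a vector field $\xi$ on $\Sigma$; eliminating the linear term $\xi$ is the hard part, a coefficient-by-coefficient computation (this is where $\mu\ne-\frac12$ and $\mu\ne 0$ are used) that ends by showing $\xi\ne 0$ would force $\rho=0$, contradicting non-Ricci-flatness.

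There is also a secondary gap in your second step: the local normal form of a self-dual Walker metric is not $g_{D,\Phi,T,S}$ but contains in addition a term \emph{cubic} in the fibre coordinates, $\iota X(\iota\Id\circ\iota\Id)$ for a vector field $X$ on $\Sigma$ (Equation~(\ref{eq:modified-Riemannian-extension}) of the paper); it cannot be absorbed into $T$, $S$, $\Phi$, and killing it again requires the quasi-Einstein equation (the paper computes $\coef(\mathfrak{Q}(h)(\partial_{x^{i}},\partial_{x_{i'}});x_{1'})=\mu h X^i h$ and concludes $X=0$). A possible repair of the pullback step would be to exploit that Theorem~\ref{th:walker} produces a parallel distribution containing $\nabla f$, so that Walker coordinates adapted to \emph{that} distribution automatically give $\partial_{x_{i'}}f=0$; but then you must verify that the normal form of \cite{CLGRGVL} can be realized in coordinates adapted to this same distribution, which is a statement you would have to prove, not quote. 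Once these points are settled, your block computations, the normalization $T=Ce^{-\hat f}\Id$ with $\lambda=\frac32Ce^{-f}$, and your dichotomy ($\lambda$ constant $\Rightarrow C=0\Rightarrow\lambda=0$, yielding the deformed Riemannian extension of Assertion~(1); $\lambda$ non-constant yielding Assertion~(2)) do track the paper's computations, with the cosmetic difference that the paper obtains $\lambda=0$ in the constant case directly from Assertion~(3) of Lemma~\ref{lemma:formulae}.
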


\subsection{Outline of the paper} 
In Section~\ref{S2}, we provide some general results concerning generalized quasi-Einstein manifolds.
In Section~\ref{S3}, we examine
the non-isotropic setting and establish Theorem~\ref{th:non-isotropic}. In Section~\ref{S4}, we examine
the isotropic setting and establish Theorem~\ref{th:walker}. We continue our analysis of the isotropic setting
in Section~\ref{S5} and establish Theorem~\ref{Thm10}, and Theorem~\ref{th:isotropic}. 
Let $(M,D)$ be an affine manifold of arbitrary dimension. 
In Section~\ref{S6}, we linearize Equation~(\ref{eq:QEE-affine}) to define an equivalent equation called the {\it affine quasi-Einstein equation} 
\begin{equation}\label{x25}
\operatorname{Hes}^D_{\hat h}=\mu\hat h\rho^D_s\text{ for }\hat h\in \mathcal{C}^\infty(M)\,.
\end{equation}
We relate Equation \ref{x25} to the geometries described above and give some of its basic properties. This equation is essentially the only linear second order
partial differential equation which is natural in the context of affine geometry and is important in its own right.
The remaining part of the paper deals with examples that illustrate important aspects of the equation. In Section~\ref{S7} we give solutions to Equation~(\ref{x25}) which are based on homogeneous
affine surface geometries; this gives rise to purely algebraic considerations. In Section~\ref{S8}, we use the Cauchy-Kovalevskaya
Theorem to construct inhomogeneous surface geometries solving Equation~(\ref{x25}).
In Section~\ref{S9}, we give examples which are anti-self-dual but 
not self-dual and consequently do not fit into the hypothesis of Theorem~\ref{th:isotropic}.

\subsection{Notational conventions}\label{S1.5}
Let $\nabla$ be the Levi-Civita connection of a pseudo-Riemannian manifold $\mathcal{M}=(M,g)$ of dimension $n$. Let
$\mathcal{R}$ be the curvature operator, $R$ be the curvature tensor, $\rho$ be the Ricci tensor,
Ric be the Ricci operator, $\tau$ be the scalar curvature, $W$ be the Weyl tensor, and $C$ be the Cotton tensor:
\medbreak\quad
$\mathcal{R}(X,Y)Z=\nabla_{[X,Y]}Z-[\nabla_X,\nabla_Y]Z,\quad R(X,Y,Z,T)=g(\mathcal{R}(X,Y)Z,T)$,
\medbreak\quad
$\rho(X,Y)=\operatorname{Tr}\{Z\to \mathcal{R}(X,Z)Y\}$,\quad $g(\Ric(X),Y)=\rho(X,Y)$,\quad
$\tau=\operatorname{Tr}(\Ric)$,
\medbreak\quad
$W(X,Y,Z,T)=R(X,Y,Z,T)
+\frac{\tau}{(n-1)(n-2)}\{g(X,Z)g(Y,T)-g(X,T)g(Y,Z))\}$
\medbreak\qquad$+\frac{1}{(n-2)}\{\rho(X,T)g(Y,Z)-\rho(X,Z)g(Y,T)+\rho(Y,Z)g(X,T)-\rho(Y,T)g(X,Z)\}$,
\medbreak\quad$C(X,Y,Z)=-\frac{n-2}{n-3}\operatorname{div}_4 W(X,Y,Z)$.
\medbreak\noindent The \emph{Hessian tensor} $\operatorname{Hes}_f(X,Y)$, \emph{Hessian operator} $\hes_f$, and 
the \emph{Laplacian} $\Delta f$ of a
smooth function $f$ are given by:
\medbreak\quad $\operatorname{Hes}_f(X,Y)=(\nabla_X df)(Y)=XY(f)-(\nabla_X Y)(f)$,
\medbreak\quad
$g(\hes_f(X),Y)=\Hes_f(X,Y)$,\quad$\Delta f=\operatorname{Tr}(\hes_f)$.
\medbreak\noindent Note that $\mathcal{R}$, $\rho$, and $\operatorname{Hes}_f$ are well defined in the context of affine geometry; the other
tensors and operators are not. Since we are assuming the connection $D$ is torsion free, the Hessian is symmetric. However,
even with this assumption, the Ricci tensor need not be symmetric. Consequently, we decompose $\rho^D=\rho^D_s+\rho_a^D$ into the
symmetric and alternating Ricci tensors where 
$$
\rho^D_s(X,Y):=\textstyle\frac12\{\rho^D(X,Y)+\rho^D(Y,X)\}\text{ and }\rho_a^D(X,Y):=\frac12\{\rho^D(X,Y)-\rho^D(Y,X)\}\,.
$$

\section{Generalized quasi-Einstein manifolds}\label{S2}

We now establish some general results concerning generalized quasi-Einstein manifolds that we will use subsequently. 
\begin{lemma}\label{lemma:formulae}
Let $(M,g,f,\mu)$ be a generalized quasi-Einstein manifold. Then
\begin{enumerate}
\item $\tau+\Delta f-\mu \|\nabla f\|^2  = n\lambda$.
\smallbreak\item $\nabla\tau+ \nabla \Delta f-2\mu\hes_f(\nabla f)  = n \nabla\lambda$.
\smallbreak\item  $\nabla \tau+2\mu(\lambda(n-1)-\tau) \nabla f+ 2 (\mu-1)\Ric(\nabla f)=2(n-1)\nabla \lambda$.
\smallbreak\item  $R(X,Y,Z, \nabla f)
=d\lambda(X)g(Y,Z)- d\lambda(Y)g(X,Z)+(\nabla_Y \rho)(X,Z)$\smallbreak\qquad\qquad
$+\mu\left\{df(Y)\Hes_f(X,Z)-df(X)\Hes_f(Y,Z)\right\}$.
\smallbreak\item Let $\eta=\mu(n-2)+1$. Then 
\smallbreak\quad $ W(X,Y,Z,\nabla f)=-C(X,Y,Z)+\frac{\tau\eta\{ df(Y)g(X,Z)-df(X)g(Y,Z)\}}{(n-1)(n-2)}$
\smallbreak\qquad$+\frac{\eta\{\rho(X,\nabla f)g(Y,Z)-\rho(Y,\nabla f)g(X,Z)\}}{(n-1)(n-2)}
+\frac{\eta\{\rho(Y,Z)df(X)-\rho(X,Z)df(Y)\}}{(n-2)}$.
\end{enumerate}
\end{lemma}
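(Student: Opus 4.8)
The plan is to prove the five assertions in order, since each is built from its predecessors together with one standard differential identity. Assertions (1) and (2) are essentially immediate. For (1) I would trace Equation~\eqref{eq:general quasi-Einstein} against $g$, using $\operatorname{Tr}\Hes_f=\Delta f$, $\operatorname{Tr}\rho=\tau$, $\operatorname{Tr}(df\otimes df)=\|\nabla f\|^2$ and $\operatorname{Tr}(\lambda g)=n\lambda$. For (2) I would differentiate the scalar identity (1); the only delicate term is $\nabla\|\nabla f\|^2$, which equals $2\hes_f(\nabla f)$ because $\nabla_X\|\nabla f\|^2=2\Hes_f(X,\nabla f)$ and $\hes_f$ is self-adjoint.

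For (3) I would take the divergence of the full tensor Equation~\eqref{eq:general quasi-Einstein}, reading each side as a gradient field. Three inputs feed in: the contracted second Bianchi identity $\operatorname{div}\rho=\tfrac12\nabla\tau$; the commutation formula $\operatorname{div}(\Hes_f)=\nabla\Delta f+\Ric(\nabla f)$, which comes from the Ricci identity applied to $df$ followed by tracing the curvature term (in the sign convention of Section~\ref{S1.5}); and the Leibniz rule $\operatorname{div}(df\otimes df)=(\Delta f)\nabla f+\hes_f(\nabla f)$. I would then remove $\nabla\Delta f$ using (2), and use Equation~\eqref{eq:general quasi-Einstein} evaluated on the vector $\nabla f$, namely $\hes_f(\nabla f)=\lambda\nabla f-\Ric(\nabla f)+\mu\|\nabla f\|^2\nabla f$, together with (1), to remove $\Delta f$. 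The $\mu^2\|\nabla f\|^2\nabla f$ contributions cancel, and collecting what is left yields (3).

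Assertion (4) expresses the curvature contracted with $\nabla f$ in terms of the data of the equation. I would apply $\nabla_X$ to Equation~\eqref{eq:general quasi-Einstein} evaluated at $(Y,Z)$, subtract the same with $X$ and $Y$ interchanged, and invoke the symmetry of $\Hes_f$ to cancel the terms proportional to $df(Z)$. The antisymmetrised third derivative of $f$ is a pure curvature term: the Ricci identity gives $(\nabla_X\Hes_f)(Y,Z)-(\nabla_Y\Hes_f)(X,Z)=R(X,Y,Z,\nabla f)$ in the convention of Section~\ref{S1.5}. What survives is the antisymmetrised Ricci derivative together with the two Hessian terms, which is (4).

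Finally, for (5) I would insert $T=\nabla f$ into the Weyl decomposition recorded in Section~\ref{S1.5} and replace $R(X,Y,Z,\nabla f)$ using (4). The $\nabla\lambda$ terms that appear I would eliminate by (3), which expresses $\nabla\lambda$ through $\nabla\tau$, $\nabla f$ and $\Ric(\nabla f)$, while the $\mu$-Hessian terms I would rewrite through $\Hes_f=\lambda g-\rho+\mu\,df\otimes df$. The antisymmetrised Ricci derivatives then combine with the $\nabla\tau$ contribution into the Cotton tensor $-C(X,Y,Z)$, and the remaining $\rho(\cdot,\nabla f)$, $\rho(\cdot,\cdot)\,df$ and $\tau\,df$ terms assemble, once the coefficients $\tfrac{1}{(n-1)(n-2)}$ and $\tfrac1{n-2}$ are collected, into the single factor $\eta=\mu(n-2)+1$. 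I expect this last step to be the main obstacle, since it is where several substitutions interact and the cancellations must be tracked with care. A reassuring check is that at the conformally Einstein value $\mu=-\tfrac1{n-2}$ one has $\eta=0$, so (5) degenerates to $W(X,Y,Z,\nabla f)=-C(X,Y,Z)$, consistent with the distinguished role of this value highlighted in Example~\ref{subsubsect:conf-Einstein}.
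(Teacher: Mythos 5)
Your proposal is correct and follows essentially the same route as the paper's (largely omitted) proof: tracing the equation for (1), differentiating for (2), the divergence/Bochner identities combined with (1), (2) and the equation evaluated on $\nabla f$ for (3), antisymmetrizing the covariant derivative of the generalized quasi-Einstein equation together with the Ricci identity for (4), and substituting (4) into the Weyl decomposition, eliminating $\nabla\lambda$ via (3) and assembling the Cotton tensor for (5). One remark: your derivation of (4) correctly produces the full antisymmetrized pair $(\nabla_Y\rho)(X,Z)-(\nabla_X\rho)(Y,Z)$, which is precisely what the computation of (5) requires; the paper's stated Assertion~(4) displays only the term $(\nabla_Y\rho)(X,Z)$, an apparent typographical omission, so your version is the one to use.
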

\begin{proof}
As one may use the generalized quasi-Einstein equation and the Bochner formula to establish Assertions~(1)--(3) in
exactly the same fashion that analogous formulas for gradient Ricci almost solitons were established in \cite{LCFLorentzianQE,PRRS};
we omit details in the interests of brevity. One covariantly differentiates Equation~\eqref{eq:general quasi-Einstein} and uses
the definition to establish Assertion~(4). One can express the Cotton tensor in the form: 
\begin{equation}\label{eq:cotton}
\begin{array}{l} 
 C(X,Y,Z)= (\nabla_X \rho)(Y,Z)-(\nabla_Y \rho)(X,Z)\\
\noalign{\medskip}
\phantom{C(X,Y,Z)= -\frac{1}{2}\nabla_X }-\frac{1}{2(n-1)}(X(\tau) g(Y,Z)-Y(\tau) g(X,Z)).
\end{array}
\end{equation} 
We substitute the curvature tensor term into the Weyl tensor and use Assertion~(4). Using Equation~(\ref{eq:cotton}) we may
then make a direct computation to establish Assertion~(5).
\end{proof}

\begin{remark}
\rm
Note that for $\eta=0$ many of the terms in Assertion~(5) of Lemma~\ref{lemma:formulae} vanish. This is precisely the case in which the manifold is conformally Einstein as described previously in Example~\ref{subsubsect:conf-Einstein}.
\end{remark}

\section{The non-isotropic setting: the proof of Theorem~\ref{th:non-isotropic}}\label{S3}
Let $\Lambda^\pm=\{\omega\in\Lambda^2:\star\,\omega=\pm \omega\}$ be the spaces of 
self-dual $(\Lambda^+)$ and anti-self-dual $(\Lambda^-)$ $2$-forms for a 4-dimensional pseudo-Riemannian manifold $\mathcal{M}=(M,g)$.
Let $\{E_1,E_2,E_3,E_4\}$ be an orthonormal local frame for the tangent bundle, let $\varepsilon_i=g(E_i,E_i)=\pm1$ for $i\in\{2,3,4\}$. One has
\begin{eqnarray*}
&&\Lambda^{\pm}=\operatorname{span}\{E^1\wedge E^2\pm \varepsilon_3\varepsilon_4 E^3\wedge E^4,
E^1\wedge E^3\mp \varepsilon_2\varepsilon_4 E^2\wedge E^4,\\
&&\qquad\qquad\qquad E^1\wedge E^4\pm \varepsilon_2\varepsilon_3 E^2\wedge E^3\}\,.
\end{eqnarray*}
We say that $(M,g)$ is self-dual if $W^-=0$. Let $\{i,j,k\}$ be a re-ordering of the indices $\{2,3,4\}$ and let $\sigma_{ijk}$ be the sign of the associated
permutation. Then $\mathcal{M}$ is self-dual if and only if the following identity is satisfied:
\begin{equation}\label{eq:self-dual}
W(E_1,E_i,X,Y)=\sigma_{ijk} \varepsilon_j\varepsilon_k W(E_j,E_k,X,Y) \text{ for any vector fields } X \text{ and } Y.
\end{equation}
We use Assertion~(5) of Lemma~\ref{lemma:formulae}. Let $\eta=2\mu+1$.
We use Equation~\eqref{eq:self-dual} to see that if a quasi-Einstein manifold is self-dual then
\begin{equation}\label{E10}\begin{array}{l}
\phantom{=}\textstyle\frac{\tau {\eta}}{6}\left\{ df(E_i)g(E_1,Z)-df(E_1)g(E_i,Z)\right\}\\[0.05in]
\textstyle\quad+\frac{{\eta}}{6}\{\rho(E_1,\nabla f)g(E_i,Z)-\rho(E_i,\nabla f)g(E_1,Z)\}\\[0.05in]
\textstyle\quad+\frac{{\eta}}{2}\left\{\rho(E_i,Z)df(E_1)-\rho(E_1,Z)df(E_i)\right\}\\[0.05in]
=\textstyle\sigma_{ijk} \varepsilon_j\varepsilon_k \frac{\tau {\eta}}{6}\left\{ df(E_k)g(E_j,Z)-df(E_j)g(E_k,Z)\right\}\\[0.05in]
\textstyle\quad+\sigma_{ijk} \varepsilon_j\varepsilon_k\frac{{\eta}}{6}\{\rho(E_j,\nabla f)g(E_k,Z)-\rho(E_k,\nabla f)g(E_j,Z)\}\\[0.05in]
\textstyle\quad+\sigma_{ijk} \varepsilon_j\varepsilon_k\frac{{\eta}}{2}\left\{\rho(E_k,Z)df(E_j)-\rho(E_j,Z)df(E_k)\right\}\,.
\end{array}\end{equation}
Since $\|\nabla f\|\neq 0$, we may choose the local orthonormal frame so $E_1$ is a non-zero multiple of $\nabla f$.
We may then use Equation~(\ref{E10}) with $Z=E_2$, $Z=E_3$, or $Z=E_4$ to see that $\rho$ is diagonal with respect to this basis.
It now follows that $3\varepsilon_i\rho(E_i,E_i)=\tau-\varepsilon_1 \rho(E_1,E_1)$. The orientation plays no role and the
same conclusion follows if $\mathcal{M}$ is anti-self-dual.
We use Equation~\eqref{eq:general quasi-Einstein} to see that for $2\le i\le 4$ we have:
\[
\Hes_f(E_i,E_i)=\lambda g(E_i,E_i)-\rho(E_i,E_i)=\left(\lambda-\frac{\tau-\varepsilon_1 \rho(E_1,E_1)}3\right) g(E_i,E_i).
\]
Hence, the level hypersurfaces of $f$ are totally umbilical. 
Since $\operatorname{span}\{ \nabla f\}$ is a $1$-dimensional totally geodesic distribution, $\mathcal{M}$ decomposes locally
as a twisted product $I\times_\varphi N$. Since the mixed terms $\rho(E_1,E_i)$ vanish, the twisted product reduces to a warped product.
And, since $I\times_\varphi N$ is self-dual, the warped product is locally conformally flat and the fiber $N$ has constant sectional curvature
(see \cite{Brozos-Garcia-Valle} for a more detailed exposition).
\qed

\section{The isotropic setting I: the proof of Theorem~\ref{th:walker}}\label{S4}

In this section we study isotropic generalized quasi-Einstein manifolds which are half conformally flat and which have neutral
signature $(2,2)$. We fix the orientation so that the manifold is self-dual to simplify the arguments of this section.
We use the fact that $\nabla f$ is a null vector field to choose a local orthonormal frame so that
$$\begin{array}{cc}
-g(E_1,E_1)=g(E_2,E_2)=1,&-g(E_3,E_3)=g(E_4,E_4)=1,\\[0.05in]
g(E_i,E_j)=0\text{ for }i\ne j,&\nabla f=\textstyle\frac1{\sqrt2}(E_1+E_2)\,.
\end{array}$$
We also introduce a corresponding frame of null vector fields
\begin{equation}\label{eq:null-basis}
\mathcal{B}=\left\{\nabla f=\frac{E_1+E_2}{\sqrt2}, U=\frac{E_4-E_3}{\sqrt2},V=\frac{E_2-E_1}{\sqrt2},T=\frac{E_3+E_4}{\sqrt2}\right\}\,.
\end{equation}
This is a hyperbolic frame; the only nonzero components of the metric tensor relative to the local frame $\mathcal{B}$ are
\begin{equation}\label{E13}
g(\nabla f,V)=g(U,T)=1\,.
\end{equation}
We use Equation~\eqref{eq:self-dual} to see that the metric is self-dual if and only if we have the following identities for any $X$ and $Y$:
\begin{eqnarray}
&&W(\nabla f,V,X,Y)=W(U,T,X,Y)\,,\label{eq:self-dual-basisB1}\\
&&W(U,V,X,Y)=0\,,\label{eq:self-dual-basisB2}\\
&&W(\nabla f,T,X,Y)=0\,.\label{eq:self-dual-basisB3}
\end{eqnarray}
\begin{lemma}\label{lemma:isotropic}
Let $(M,g,f,\mu)$ be an isotropic self-dual generalized quasi-Einstein manifold. Then $\lambda=\frac{\tau}{4}$ and the Ricci operator has the form:
\begin{equation}\label{eq:ricci-matrix}
\operatorname{Ric}=\left(\begin{array}{cccc}
\frac{\tau}{4}&0&a&c\\
0&\frac{\tau}{4}&c&b\\
0&0&\frac{\tau}{4}&0\\
0&0&0&\frac{\tau}{4}
\end{array}\right).
\end{equation}
\end{lemma}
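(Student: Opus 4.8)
The plan is to read off $\operatorname{Ric}(\nabla f)$ from the quasi-Einstein equation, then use self-duality to pin down $\operatorname{Ric}(U)$, and finally recover $\lambda$ from the scalar curvature. First I would exploit that $\|\nabla f\|^2\equiv0$ on the isotropic locus: the standard identity $\hes_f(\nabla f)=\frac12\nabla\|\nabla f\|^2$ gives $\hes_f(\nabla f)=0$, so $\operatorname{Hes}_f(\nabla f,\cdot)=0$. Substituting $X=\nabla f$ into Equation~\eqref{eq:general quasi-Einstein} and using $df(\nabla f)=\|\nabla f\|^2=0$ yields $\rho(\nabla f,Y)=\lambda\,df(Y)$ for all $Y$, i.e. $\operatorname{Ric}(\nabla f)=\lambda\nabla f$. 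Evaluating on the frame~\eqref{eq:null-basis} together with \eqref{E13} gives $\rho(\nabla f,\nabla f)=\rho(\nabla f,U)=\rho(\nabla f,T)=0$ and $\rho(\nabla f,V)=\lambda$, which is the first row and column of \eqref{eq:ricci-matrix} once $\lambda$ is identified with $\tau/4$.

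Next I would insert $\rho(\cdot,\nabla f)=\lambda\,df(\cdot)$ into Assertion~(5) of Lemma~\ref{lemma:formulae}, with $n=4$ and $\eta=2\mu+1$. The two terms involving $\rho(\cdot,\nabla f)$ combine with the $\tau$-term, leaving
$$W(X,Y,Z,\nabla f)=-C(X,Y,Z)+\frac{\eta(\lambda-\tau)}{6}\{df(X)g(Y,Z)-df(Y)g(X,Z)\}+\frac{\eta}{2}\{\rho(Y,Z)df(X)-\rho(X,Z)df(Y)\}.$$
Specializing to $X=U$, $Y=V$ and using $df(U)=0$, $df(V)=1$ collapses this to
$$W(U,V,Z,\nabla f)=-C(U,V,Z)-\frac{\eta(\lambda-\tau)}{6}\,g(U,Z)-\frac{\eta}{2}\,\rho(U,Z),$$
and the self-duality relation~\eqref{eq:self-dual-basisB2} makes the left-hand side vanish for every $Z$.

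The remaining difficulty is the Cotton term $C(U,V,Z)$. Here I would use that self-duality of $W$ propagates to the Cotton tensor: since $C=-2\operatorname{div}_4 W$ in dimension four and the divergence respects the splitting $\Lambda^2=\Lambda^+\oplus\Lambda^-$, the condition $W^-=0$ forces the anti-self-dual part of $C$ to vanish; as $U\wedge V\in\Lambda^-$, this gives $C(U,V,Z)=0$ for all $Z$. With both $W(U,V,Z,\nabla f)$ and $C(U,V,Z)$ eliminated, and $\eta\neq0$ by the hypothesis $\mu\neq-\frac12$, I conclude $\rho(U,Z)=\frac{\tau-\lambda}{3}g(U,Z)$, that is $\operatorname{Ric}(U)=\frac{\tau-\lambda}{3}U$.

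Finally I would take the trace of $\operatorname{Ric}$ in the hyperbolic frame, where the pairing \eqref{E13} gives $\tau=2\rho(\nabla f,V)+2\rho(U,T)=2\lambda+2\cdot\frac{\tau-\lambda}{3}$; solving yields $\lambda=\frac{\tau}{4}$, whence $\operatorname{Ric}(\nabla f)=\frac{\tau}{4}\nabla f$ and $\operatorname{Ric}(U)=\frac{\tau}{4}U$. Writing $\operatorname{Ric}(V)$ and $\operatorname{Ric}(T)$ in the frame \eqref{eq:null-basis} using the symmetry of $\rho$, and setting $a=\rho(V,V)$, $b=\rho(T,T)$, $c=\rho(V,T)$, reproduces exactly the matrix \eqref{eq:ricci-matrix}. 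I expect the genuine obstacle to be the Cotton tensor: without the observation that self-duality forces $C(U,V,\cdot)\equiv0$, the components $\rho(U,\cdot)$ cannot be isolated, so justifying this propagation from $W$ to $C$ is the technical heart of the argument.
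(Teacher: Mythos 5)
Your proof is correct and, although it starts from the same two ingredients as the paper (the observation that $\hes_f(\nabla f)=0$ forces $\Ric(\nabla f)=\lambda\nabla f$, and Assertion~(5) of Lemma~\ref{lemma:formulae} tested against the self-duality relations in the null frame), it then follows a genuinely different and in some ways cleaner route. The paper uses all three relations \eqref{eq:self-dual-basisB1}--\eqref{eq:self-dual-basisB3}: relation \eqref{eq:self-dual-basisB1} with $Y=\nabla f$ yields $\lambda=\frac{\tau}{4}$, relation \eqref{eq:self-dual-basisB2} then yields $\Ric(U)=\frac{\tau}{4}U$, and relation \eqref{eq:self-dual-basisB3} is invoked to get $\rho(T,T)=0$. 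You use only \eqref{eq:self-dual-basisB2}, obtaining $\Ric(U)=\frac{\tau-\lambda}{3}U$, and then recover $\lambda=\frac{\tau}{4}$ from the trace identity $\tau=2\rho(\nabla f,V)+2\rho(U,T)$; the matrix \eqref{eq:ricci-matrix} then follows, as you note, from the two eigenvector equations plus the symmetry of $\rho$ alone, since $a$, $b$, $c$ are free parameters there. So the paper's third step is not actually needed for the stated conclusion. Your explicit treatment of the Cotton tensor is also a genuine improvement in rigor: the paper's displayed formulas silently drop the $-C(X,Y,Z)$ term of Assertion~(5), and the justification is exactly the one you supply ($C=-2\operatorname{div}_4W$ in dimension four, $\nabla\star=0$, so $W^-=0$ forces the part of $C$ pairing against anti-self-dual bivectors to vanish, and $U\wedge V$ is metrically dual to an anti-self-dual $2$-form). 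Incidentally, this mechanism does not control the Cotton term $C(Y,V,T)$ arising in the paper's use of \eqref{eq:self-dual-basisB3}, because $V\wedge T$ is dual to a self-dual $2$-form; your route, which avoids that relation entirely, sidesteps this difficulty.

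One caveat you should state openly: you invoke ``the hypothesis $\mu\neq-\frac12$'' in order to divide by $\eta=2\mu+1$, but the lemma as stated carries no such hypothesis. This does not put you below the paper's standard---its own argument divides by the same factor without comment (the $\eta$'s have simply vanished from its displayed equations), and the lemma is only ever applied in the paper under the assumption $\mu\neq-\frac12$---but strictly speaking both arguments establish the lemma only when $\eta\neq0$: for $\mu=-\frac12$, Assertion~(5) degenerates to $W(X,Y,Z,\nabla f)=-C(X,Y,Z)$ and yields no information about the Ricci tensor.
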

\begin{proof}
Since $g(\nabla f,\nabla f)=0$,
$$
0= \nabla_X g(\nabla f,\nabla f)=2g(\nabla_X \nabla f,\nabla f)=2g(\nabla_{\nabla f} \nabla f,X)\text{ for any }X\,.
$$
Consequently, $\hes_f(\nabla f)=0$. Thus by Equation~\eqref{eq:general quasi-Einstein}, $\Ric(\nabla f)=\lambda \nabla f$.
Since $(M,g)$ is self-dual, we take $Y=\nabla f$ in Equation~\eqref{eq:self-dual-basisB1} to see that 
$$
W(\nabla f,V,X,\nabla f)=W(U,T,X,\nabla f)\,.
$$
We then use Assertion~(5) of Lemma~\ref{lemma:formulae} to conclude that $\frac{\tau-4\lambda}6 g(\nabla f, X) =0$ for any $X$.
It now follows that $\lambda=\frac{1}{4}\tau$ as desired. We complete the proof by examining the Ricci tensor.
By Equation~\eqref{eq:self-dual-basisB2}, we have $W(U,V,X,\nabla f)=0$. We use Assertion~(5) of Lemma~\ref{lemma:formulae} to see that
$\rho(U,X)=\textstyle\frac{\tau}{4} g(U,X)$ for all $X$. Consequently, $\Ric(U)=\frac{\tau}{4} U$. We set $X=V$ in Equation~\eqref{eq:self-dual-basisB3} to show that $W(\nabla f,T,V,Y)=0$ for all $Y$. Thus:
$$
\textstyle0=W(Y,V,T,\nabla f)=\frac{1}{2}\left\{\frac{\tau}4 g(Y,T)
-\rho(Y,T) +\rho(V,T)g(Y,\nabla f)\right\}\text{ for all }Y\,.
$$
We set $Y=T$ to see that  $\rho(T,T)=0$ so the Ricci tensor has the form given.
\end{proof}

\medbreak\noindent{\it Proof of Theorem~\ref{th:walker}.}
We have already seen that  $g(\nabla_X \nabla f,\nabla f)=0$. Moreover, a similar argument using the fact that $g(U,U)=0$ shows that
$g(\nabla_X U,U)=0$ for all $X$.  On the other hand, since  $\Ric(U)=\lambda U$, we may use Equation~\eqref{eq:general quasi-Einstein} to see that
$\hes_f(U)=0$. Now, since $g(U,\nabla f)=0$, we have that 
$$
g(\nabla_X U,\nabla f)=-g(U,\nabla_X\nabla f)=-\Hes_f(U,X)=0\text{ for all }X\,.
$$
We have shown that
\begin{equation}\label{E18}\begin{array}{ll}
g(\nabla_X \nabla f,\nabla f)=0,&g(\nabla_X U,U)=0\\[0.05in]
g(\nabla_X U,\nabla f)=0,& g(\nabla_X\nabla f,U)=0\,.
\end{array}\end{equation}
Let $\mathfrak{D}=\operatorname{span}\{\nabla f,U\}$. By Equation~(\ref{E13}), $\mathfrak{D}$ is a null distribution. 
Furthermore, Equation~(\ref{E18}) implies that $\nabla \mathfrak{D}\subset\mathfrak{D}$. Consequently,
 $\mathfrak{D}$ is a $2$-dimensional null parallel distribution and $(M,g)$ is locally a Walker manifold.
\qed

\section{The isotropic setting II: the proof of Theorem~\ref{Thm10} and Theorem~\ref{th:isotropic}}\label{S5}
We continue our study of half conformally flat isotropic generalized quasi-Einstein manifolds by examining the Walker setting. 
The orientation of the manifold, which has not played a role previously, now plays as a role since, as we saw earlier, Walker structures 
have a canonical orientation determined by the null parallel distribution. Adopt the notation of Equation~(\ref{eq:metrictensorofriemannextensions}).

\begin{lemma}\label{lemma:self-dual-Walker}
Let $(M,g)$ be a $4$-dimensional Walker manifold of neutral signature $(2,2)$
which is not Ricci-flat. If $(M,g,f,\mu)$ is a self-dual isotropic generalized quasi-Einstein manifold with $\mu\neq -\frac{1}2$, then 
$(M,g)$ is locally isometric to a modified Riemannian extension $(T^\ast \Sigma,g_{D,\Phi,T,\operatorname{Id}})$ of an affine surface $(\Sigma,D)$  and 
$f=\pi^\ast \hat f$, where $\hat f\in \mathcal{C}^\infty(\Sigma)$.
\end{lemma}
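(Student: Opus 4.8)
The plan is to reduce to Walker normal coordinates, read off the potential, and then recognize the metric as a modified Riemannian extension whose second endomorphism is normalized to the identity by the curvature constraints already extracted in Lemma~\ref{lemma:isotropic}. By Theorem~\ref{th:walker} we may work in Walker coordinates $(x^1,x^2,x_{1'},x_{2'})$ in which $g=2\,dx^i\circ dx_{i'}+a_{ij}\,dx^i\circ dx^j$ and $\mathfrak{D}=\operatorname{span}\{\partial_{x_{1'}},\partial_{x_{2'}}\}$. The first step is to pin down $f$. Inverting the Walker metric gives $\nabla f=(\partial_{x_{i'}}f)\,\partial_{x^i}+(\partial_{x^i}f-a_{ij}\partial_{x_{j'}}f)\,\partial_{x_{i'}}$, so the hypothesis $\nabla f\in\mathfrak{D}=\operatorname{span}\{\nabla f,U\}$ forces the horizontal components $\partial_{x_{i'}}f$ to vanish. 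Hence $f$ is independent of the fiber coordinates and descends to a function $\hat f$ on the base, $f=\pi^\ast\hat f$, which is one of the two assertions.

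Next I would determine the shape of the $a_{ij}$. Because $\partial_{x_{i'}}f=0$ and, for any Walker metric, $\nabla_{\partial_{x_{i'}}}\partial_{x_{j'}}=0$, the generalized quasi-Einstein equation along the fiber directions reduces to $\rho|_{\mathfrak{D}\times\mathfrak{D}}=0$, consistent with the block form of $\operatorname{Ric}$ in Lemma~\ref{lemma:isotropic}. The structural input is that $(M,g)$ is a self-dual Walker manifold for its canonical orientation; by the description of such metrics in \cite{CLGRGVL} it is locally a modified Riemannian extension $g_{D,\Phi,T,S}$ of an affine surface $(\Sigma,D)$ with base coordinates $(x^1,x^2)$ and dual fiber coordinates $(x_{1'},x_{2'})$. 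Concretely, the part of $a_{ij}$ linear in $x_{k'}$ recovers the Christoffel symbols $\Gamma_{ij}{}^k$ of a torsion-free connection $D$ (torsion-free because $a_{ij}=a_{ji}$ forces $\Gamma_{ij}{}^k=\Gamma_{ji}{}^k$), the $x_{k'}$-independent part is $\Phi_{ij}$, and the quadratic part is a symmetrized product built from the endomorphisms $T$ and $S$.

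It remains to normalize $S=\operatorname{Id}$, and this is where the hypotheses that $(M,g)$ is not Ricci flat and that $\mu\ne-\tfrac12$ are used. The quadratic-in-$x_{k'}$ term $\tfrac12 x_{r'}x_{s'}(T^r_iS^s_j+T^r_jS^s_i)$ governs the scalar curvature and the mixed block $\rho(\partial_{x_{i'}},\partial_{x^j})$ of the Ricci tensor; Lemma~\ref{lemma:isotropic} forces $\operatorname{Ric}$ to restrict to the pure scalar $\tfrac{\tau}{4}\operatorname{Id}$ on $\mathfrak{D}$, i.e. $\rho(\partial_{x_{i'}},\partial_{x^j})=\tfrac{\tau}{4}\delta_{ij}$ with no nilpotent part. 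Matching the explicit Ricci tensor of $g_{D,\Phi,T,S}$ against this scalar form, together with the self-duality relations \eqref{eq:self-dual-basisB1}--\eqref{eq:self-dual-basisB3}, forces the quadratic term into the special shape $\tfrac12\{T^r_ix_{r'}x_{j'}+T^r_jx_{r'}x_{i'}\}$ of a single endomorphism, which is exactly the case $S=\operatorname{Id}$; when $\tau\ne0$ this is genuine, and when $\tau=0$ with $\rho\ne0$ the same computation gives $T=0$, a deformed Riemannian extension, which is still of the form $g_{D,\Phi,T,\operatorname{Id}}$. I expect this last identification to be the main obstacle: one must carry the full Ricci tensor of the modified Riemannian extension and show that the scalar form of $\operatorname{Ric}|_{\mathfrak{D}}$ is incompatible with a genuine two-endomorphism symmetrized product. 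The Ricci-flat case is excluded precisely because there this normalization, and indeed the reconstruction of the affine data, is no longer determined.
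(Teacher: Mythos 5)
Your first step is an attractive shortcut, but it conflates two different Walker structures. The lemma does not hypothesize that $\nabla f$ lies in the given parallel distribution --- for the given structure this can genuinely fail, as Example~\ref{remark:conf-Einstein} illustrates --- so when you invoke ``the hypothesis $\nabla f\in\mathfrak{D}=\operatorname{span}\{\nabla f,U\}$'' you are tacitly discarding the given Walker structure and replacing it by the one constructed in the proof of Theorem~\ref{th:walker}. For the existential conclusion of the lemma that replacement is legitimate, and it does yield $\partial_{x_{i'}}f=0$, hence $f=\pi^*\hat f$, at once; but before \cite{CLGRGVL} can be applied you must check that $(M,g)$ is self-dual with respect to the \emph{canonical orientation of this new distribution}, since self-duality and anti-self-duality are not interchangeable in Walker geometry (Remark~\ref{remark:anti-self-dual-Walker}, Section~\ref{S9}). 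The check does succeed --- in the frame of Equation~(\ref{eq:null-basis}) the $2$-form associated to $\operatorname{span}\{\nabla f,U\}$ lies in $\Lambda^+$ for the orientation making $W^-=0$ --- but it is absent from your argument. By contrast, the paper proves $f=\pi^\ast\hat f$ for the \emph{given} structure, which is why its proof is longer and why $\rho\neq0$, $\mu\neq-\frac12$ and $\mu\neq0$ all enter at that stage.

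The serious gap is in the second half: you misquote the classification of self-dual Walker metrics. That classification is \emph{not} ``locally isometric to $g_{D,\Phi,T,S}$ for some endomorphisms $T,S$''; the correct normal form, as recorded in Equation~(\ref{eq:modified-Riemannian-extension}), is $g=\iota X(\iota\Id\circ\iota\Id)+\iota T\circ\iota\Id+g_{D,\Phi}$, in which the quadratic part automatically has the ``$S=\Id$'' shape but there is an additional term, \emph{cubic} in the fiber variables, governed by a vector field $X$ on $\Sigma$. (Conversely, a general $g_{D,\Phi,T,S}$ need not be self-dual at all: the metrics $g_{D,0,T,T}$ of Section~\ref{S9} never are.) Consequently the problem you devote the remainder of the proof to --- forcing $S=\operatorname{Id}$ --- does not arise, while the problem that does arise --- proving $X=0$ --- is never addressed; without it the conclusion $g=g_{D,\Phi,T,\operatorname{Id}}$ is not reached. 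In the paper, $X=0$ is extracted from the quasi-Einstein equation itself: with $h=e^{-\mu f}=\pi^*\hat h$, the coefficient of $x_{i'}$ in $\bigl(-\operatorname{Hes}_h+\mu h(\rho-\frac{\tau}{4}g)\bigr)(\partial_{x^i},\partial_{x_{i'}})$ equals $\mu h^2X^i$, so $\mu\neq0$ kills $X$ (the case $\mu=0$ being quoted from \cite{Brozos-Garcia-Valle}); you neither perform such a computation nor treat $\mu=0$ separately. I note that your underlying idea --- matching $\rho(\partial_{x_{i'}},\partial_{x^j})$ against $\frac{\tau}{4}\delta_i^j$ as dictated by Lemma~\ref{lemma:isotropic} --- can be made to work, but only on the \emph{correct} normal form: there the mixed Ricci components contain off-diagonal terms proportional to $X^ix_{j'}$ that $\frac{\tau}{4}\delta_i^j$ cannot absorb, which forces $X=0$ and makes $T$ a multiple of $\operatorname{Id}$. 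That computation, aimed at the right unknown, is exactly what is missing.
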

\begin{proof}
We generalize the metric of Equation~(\ref{eq:metrictensorofriemannextensions}) slightly. Results of \cite{CLGRGVL} show that
there exists an affine surface $(\Sigma,D)$, an endomorphism $T$ of the tangent bundle of $\Sigma$, a symmetric bilinear form $\Phi$
on the tangent bundle of $\Sigma$, and a vector field $X$ on $\Sigma$ so that $\mathcal{M}$ is locally isometric to $(T^*\Sigma,g)$ where
\begin{equation}\label{eq:modified-Riemannian-extension}
g=\iota X(\iota \Id\circ\iota \Id)+ \iota T\circ\iota\Id +g_{D,\Phi}\,.
\end{equation}
As case $\mu=0$ was considered previously in \cite{Brozos-Garcia-Valle}, we shall assume that $\mu\neq 0$. 
We first show that $f=\pi^\ast \hat f$ for some $\hat f\in \mathcal{C}^\infty(\Sigma)$.  We set $h=e^{-\mu f}$ in Equation~\eqref{eq:general quasi-Einstein} to obtain 
the equivalent equation
$$
-\operatorname{Hes}_{h}+\mu {h}\rho=\mu {h} \lambda g\,.
$$
A similar change of variable will play a central role in the discussion of Section~\ref{S6} as well.
Lemma~\ref{lemma:isotropic} shows that $\lambda=\frac{\tau}{4}$. Consider the symmetric $(0,2)$-tensor
\begin{equation}\label{eq:def-qee-app}
\textstyle\mathfrak{Q}(h):=-\operatorname{Hes}_{h}+\mu {h}\left(\rho- \frac{\tau}{4} g\right)\,.
\end{equation}
For $i=1,2$, we compute  that
\begin{equation}\label{eq:pullback-f}
\mathfrak{Q}(h)(\partial_{x_{i'}},\partial_{x_{j'}}) =-\partial^2_{x_{i'}x_{j'}} h =0\,.
\end{equation}
Therefore, there exists a vector field $\xi$ on $\Sigma$ and a smooth function $\hat h$ on $\Sigma$ so that
\begin{equation}\label{gr3}
h=\iota\xi+\pi^*\hat{h}\,.
\end{equation}
If we can show that $\xi=0$, then it will follow that $h=\pi^*\hat h$ and correspondingly that $f=\pi^*\hat f$ as desired. Suppose to the contrary that
$\xi\ne0$. We argue for a contradiction. Choose local coordinates on $\Sigma$ so $\xi=\partial_{x^1}$. By Equation~(\ref{gr3}), $h=x_{1'}+\pi^*\hat{h}$.  
If $P$ is a polynomial in certain variables, let $\coef(P;\cdot)$ denote the coefficient of a given term in $P$. Adopt the
notation of Equation~(\ref{eq:modified-Riemannian-extension}). Expand $X=X_1\partial_{x^1}+X_2\partial_{x^2}$. 
We have $\mathfrak{Q}(h)=0$. Since $\mu\ne-\frac12$, we may show that $X=0$ by computing:
\begin{eqnarray*}
&&\coef(\mathfrak{Q}(h)(\partial_{x^{2}},\partial_{x_{1'}}));x_{1'} x_{2'})=(1+2\mu)X_1,\\
&&\textstyle\coef(\mathfrak{Q}(h)(\partial_{x^{2}},\partial_{x_{1'}});x_{2'}^2)=\frac{1}{2}X_2\,.
\end{eqnarray*}
Let $T=(T_i^j)$. Since $\mu\ne0$ and $\mu\ne-\frac12$, we show similarly that $T=0$ by computing:
\begin{eqnarray*}
&&\textstyle\coef(\mathfrak{Q}(h)(\partial_{x^{1}},\partial_{x_{1'}});x_{2'})=\frac12 T_1^2,\\
&&\textstyle\coef(\mathfrak{Q}(h)(\partial_{x^{2}},\partial_{x_{1'}});x_{1'})=\frac12 (1+2\mu) T_2^1,\\
&&\textstyle\coef(\mathfrak{Q}(h)(\partial_{x^{2}},\partial_{x_{1'}});x_{2'})=\frac14 (T_1^1+T_2^2),\\
&&\textstyle\coef(\mathfrak{Q}(h)(\partial_{x^{2}},\partial_{x_{2'}});x_{1'})=\frac14 \{(1-2\mu) T_1^1+(1+2\mu)T_2^2\}\,.
\end{eqnarray*}
Because $\lambda=\frac{\tau}4=\frac34 (T_1^1 + T_2^2 + 4 X_1 x_{1'}+ 4 X_2 x_{2'})$, we have $\lambda=0$. 
We compute:
\begin{eqnarray*}
&&\mathfrak{Q}(h)(\partial_{x^{1}},\partial_{x_{1'}})=- \Gamma_{11}{}^1,\quad
\mathfrak{Q}(h)(\partial_{x^{1}},\partial_{x_{2'}})=- \Gamma_{11}{}^2,\\
&&\mathfrak{Q}(h)(\partial_{x^{2}},\partial_{x_{1'}})=- \Gamma_{12}{}^1,\quad
\mathfrak{Q}(h)(\partial_{x^{2}},\partial_{x_{2'}})=- \Gamma_{12}{}^2,\\
&&\coef(\mathfrak{Q}(h)(\partial_{x^{2}},\partial_{x^{2}});x_{1'})=(1+2\mu)\partial_{x^1} \Gamma_{22}{}^1,\\
&&\coef(\mathfrak{Q}(h)(\partial_{x^{2}},\partial_{x^{2}});x_{2'})=\partial_{x^1} \Gamma_{22}{}^2.
\end{eqnarray*}
Hence setting $\mathfrak{Q}(h)=0$,
$$
\partial_{x^1}\Gamma_{22}{}^1=\partial_{x^1}\Gamma_{22}{}^2=0\text{ and }
\Gamma_{11}{}^1=\Gamma_{11}{}^2= \Gamma_{12}{}^1=\Gamma_{12}{}^2=0\,.
$$
This implies that $\rho=0$ which is contrary to our assumption.
Consequently, as desired
$$
\xi=0\text{ so }h=\pi^*\hat{h}\,.
$$
We expand $X=X^1 \partial_{x^{1}}+X^2 \partial_{x^{2}}$ and compute: 
\begin{eqnarray*}
&&\coef(\mathfrak{Q}(h)(\partial_{x^{1}},\partial_{x_{1'}});x_{1'})=\mu h X^1 h,\\
&&\coef(\mathfrak{Q}(h)(\partial_{x^{2}},\partial_{x_{2'}});x_{1'})=\mu h X^2 h\,.
\end{eqnarray*}
This shows that $X=0$ and, as desired, $g=\iota T\circ\iota\Id +g_{D,\Phi}$.
\end{proof}

The following is an example where $(T^\ast\Sigma, g_{D,\Phi,T,\Id})$ is a conformally Einstein modified Riemannian extension (i.e. a generalized quasi-Einstein manifold with $\mu=-\frac12$) where
the conformal function is not the pull-back of a function on the surface $(\Sigma,D)$. Consequently the assumption that 
$\mu\ne-\frac12$ in Lemma~\ref{lemma:self-dual-Walker} is essential.
\begin{example}\rm\label{remark:conf-Einstein}
Let $(x^1,x^2)$ be the usual coordinates on $\Sigma=\mathbb{R}^2$. Suppose given smooth functions
$\alpha(x^1,x^2)$, $\beta(x^1,x^2)$, and $\gamma(x^1,x^2)$ where $\gamma\ne0$. Also suppose given smooth functions $\psi_1(x^2)$
and $\psi_2(x^2)$. We consider the following structures defining $D$, $T$, $f$, and $\Phi$:
$$\begin{array}{ll}
\Gamma_{12}{}^1=\alpha,&\Gamma_{22}{}^1=\beta\\[0.05in]
\Gamma_{22}{}^2=\alpha+\psi_1(x^2),&T=\gamma dx^2\otimes \partial_{x^1},\\
f(x^1,x^2,x_{1'},x_{2'})=x_{1'}-2 \frac\alpha \gamma,&\Phi_{11}=-4\, \partial_{x^1}\!\!\left(\frac{\alpha}{\gamma}\right),\\[0.05in]
\Phi_{12}=\Phi_{21}=\frac{2\alpha^2}{\gamma}-\partial_{x^2}\!\!\left(\frac{2\alpha}{\gamma}\right),&\Phi_{22}=\frac{4 \alpha\beta}{\gamma}+\psi_2(x^2)\,.
\end{array}$$
One then has that $(T^\ast\Sigma, g_{D,\Phi,T,\Id},f,-\frac{1}{2})$ is generalized quasi-Einstein. The Ricci tensor is always nonzero and two-step nilpotent
and the conformal function has a null gradient.
\end{example}

\subsection*{The proof of Assertion~(1) of Theorem~\ref{Thm10} and Theorem~\ref{th:isotropic}}
Let $(M,g,f,\mu)$ be a self-dual generalized quasi-Einstein  manifold of signature $(2,2)$ with $\mu\neq -\frac{1}{2}$ and $\|\nabla f\|=0$
which is not Ricci flat.  Suppose that $\lambda$ is constant. 
By Lemma~\ref{lemma:isotropic}, $\tau=4\lambda$. We may now use Assertion~(3) of Lemma~\ref{lemma:formulae} to see that $\lambda=0$. 

Lemma~\ref{lemma:self-dual-Walker} shows that $g=g_{D,\Phi,T,\operatorname{Id}}$ is locally isometric to a modified Riemannian extension.
Adopt the notation of the proof of Lemma~\ref{lemma:self-dual-Walker}. We compute that
$$\begin{array}{cc}
\mathfrak{Q}(h)(\partial_{x^{1}},\partial_{x_{1'}})=\mu h T_1^1,&
\mathfrak{Q}(h)(\partial_{x^{1}},\partial_{x_{2'}})=\mu h T_1^2,\\[0.05in]
\mathfrak{Q}(h)(\partial_{x^{2}},\partial_{x_{1'}})=\mu h T_2^1,&
\mathfrak{Q}(h)(\partial_{x^{2}},\partial_{x_{2'}})=\mu h T_2^2.
\end{array}
$$
Since $\mu\ne0$, since $h\ne0$, and since $\mathfrak{Q}(h)=0$, we have $T=0$. This shows that $g$ is indeed locally isometric to a 
deformed Riemannian extension $g_{D,\Phi}$ as introduced in Subsection~\ref{sect:Walker}. 

We know from Lemma~\ref{lemma:self-dual-Walker} that $f=\pi^\ast \hat f$. Since $\lambda=0$, the generalized quasi-Einstein equation reduces to the quasi-Einstein equation $\operatorname{Hes}_{f}+\rho-\mu df\otimes df=0$. A direct computation shows that the only non-vanishing terms of this equation are 
\[
(\operatorname{Hes}_{f}+\rho-\mu df\otimes df)(\partial_{x^{i}},\partial_{x^{j}})=(\operatorname{Hes}^D_{\hat f}+2\rho^D_s-\mu d\hat f\otimes d\hat f)(\partial_{x^{i}},\partial_{x^{j}}),
\]
for $i,j=1,2$.
Thus the manifolds of Assertion~(1) of Theorem~\ref{Thm10} are indeed quasi-Einstein. Furthermore, as
 stated in Assertion (1) of Theorem~\ref{th:isotropic}, these are the only examples.
\qed

\subsection*{The proof of Assertion~(2) of Theorem~\ref{Thm10} and Theorem~\ref{th:isotropic}}
Let $(M,g,f,\mu)$ be a self-dual generalized quasi-Einstein  manifold of signature $(2,2)$ with $\mu\neq -\frac{1}{2}$ and $\|\nabla f\|=0$
which is not Ricci flat.  Suppose that $\lambda$ is non-constant.
By Lemma~\ref{lemma:self-dual-Walker}, $\mathcal{M}$ is locally isometric to a modified Riemannian extension of the form 
$g_{D,\Phi,T,\operatorname{Id}}$ with $f=\pi^\ast \hat f$.
 Since the case $\mu=0$ was already studied in \cite{Brozos-Garcia-Valle}, we suppose $\mu\ne0$ as well. Once again, we make
 the change of variable $h=e^{-\mu f}$ and work with the symmetric bilinear form $\mathfrak{Q}(h)$ of \eqref{eq:def-qee-app}.
We compute:
\begin{eqnarray*}
&&\textstyle\mathfrak{Q}(h)(\partial_{x^{1}},\partial_{x_{1'}})=\frac12 \,\mu\, h\, (T_1^1-T_2^2),\\
&&\textstyle\mathfrak{Q}(h)(\partial_{x^{1}},\partial_{x_{2'}})=\mu\, h\, T_1^2,\quad\text{and}\quad
\mathfrak{Q}(h)(\partial_{x^{2}},\partial_{x_{1'}})=\mu \,h\, T_2^1\,.
\end{eqnarray*}
Since $\mathfrak{Q}(h)=0$,  $T=\mathfrak{t}\, \Id$ is a multiple of the identity and consequently
$$
g=\pi^*(\mathfrak{t})\,\iota \Id\circ\iota\Id +g_{D,\Phi} \text{ for some }\mathfrak{t}\in \mathcal{C}^\infty(\Sigma)\,.
$$
We once again work with the generalized quasi-Einstein Equation~\eqref{eq:general quasi-Einstein} to compute
\begin{eqnarray*}
&&\coef((\Hes_f+\rho-\mu df\otimes df-\lambda g)(\partial_{x^{1}},\partial_{x^{1}});x_{1'})=\mathfrak{t} \partial_{x^1} f+\partial_{x^1}\mathfrak{t},\\
&&\coef((\Hes_f+\rho-\mu df\otimes df-\lambda g)(\partial_{x^{2}},\partial_{x^{2}});x_{2'})=\mathfrak{t} \partial_{x^2} f+\partial_{x^2}\mathfrak{t}\,.
\end{eqnarray*}
Setting these terms to zero yields $\mathfrak{t}=C e^{-\hat f}$. One verifies that
\begin{eqnarray*}
&&(\Hes_f+\rho-\mu df\otimes df-\lambda g)(\partial_{x^{i}},\partial_{x^{j}})\\
&=&\textstyle\frac{C}2 e^{-\hat f} \Phi_{ij}-(\operatorname{Hes}^D_{\hat f}+2\rho^D_s-\mu d\hat f\otimes d\hat f)(\partial_{x^{i}},\partial_{x^{j}})
\text{ for }i=1,2\,.
\end{eqnarray*}
This shows that the construction of Assertion~(2) of Theorem~\ref{Thm10}
 provides examples of generalize quasi-Einstein manifolds; Assertion~(2) of Theorem~\ref{th:isotropic} follows.
\qed 

 
\begin{remark}\rm\label{remark:anti-self-dual-Walker}
Let $(M,g)$ be a Walker manifold of signature $(2,2)$. If $(M,g,f,\mu)$ is an anti-self-dual isotropic generalized quasi-Einstein manifold then one still has that $\lambda =\frac{\tau}{4}$. To see this, one can proceed as in the proof of Lemma~\ref{lemma:isotropic} and use the anti-self-dual relation $W(\nabla f,V,X,Y)=-W(U,T,X,Y)$ instead of \eqref{eq:self-dual-basisB1}.
It was shown in \cite{DR-GR-VL} that if the self-dual Weyl curvature $W^+$ of a Walker manifold vanishes, then $\tau=0$, so $\lambda=0$ and there are not generalized quasi-Eisntein examples with non-constant $\lambda$.
We refer to Section \ref{S9} for some explicit examples of anti-self-dual quasi-Einstein manifolds which are not realized as a deformed Riemannian extension as in Theorem \ref{th:isotropic} (1).
These differences between the self-dual and the anti-self-dual conditions illustrate the fact that the Walker structure determines the orientation and, hence, self-duality and anti-self-duality are not interchangeable conditions in Walker geometry.
\end{remark}

\section{The affine quasi-Einstein equation}\label{S6}

Let $(M,g,f,\mu)$ be a self-dual generalized quasi-Einstein manifold of signature $(2,2)$ with $\lambda$ constant which is
not Ricci flat. Assume $\mu\ne-\frac12$ and $\|\nabla f\|=0$. By Theorem~\ref{th:isotropic}, $\lambda=0$ and $(M,g,f,\mu)$ is locally isomorphic to
$(T^*\Sigma,g_{D,\phi},f,\mu)$ where $f=\pi^*\hat f$ and $\hat f$ satisfies
$\operatorname{Hes}^D_{\hat f}+2\rho^D_s-\mu d\hat f\otimes d\hat f=0$.
Conversely, by Theorem~\ref{Thm10}, every such example is a self-dual isotropic quasi-Einstein Walker manifold with $\lambda=0$. Thus it
is natural to consider this equation in its own right on affine surfaces. If $\mu=0$, then $(\Sigma,D,\hat f)$ is an affine gradient Ricci soliton as 
discussed in \cite{MeE}. Thus we shall suppose that $\mu\ne0$ and consider the change of variables
$\hat h=e^{-\frac12\mu\hat f}$; a similar change of variables played a crucial role in the analysis of
Section~\ref{S5}. Equation~(\ref{eq:general quasi-Einstein}) then becomes $-\frac{2}{\mu\hat h}\operatorname{Hes}^D_{\hat h}+2\rho^D_s=0$.
This leads to the {\it affine quasi-Einstein equation}
\begin{equation}\label{QEE}
\operatorname{Hes}^D_{\hat h}=\mu\, \hat h\,\rho^D_s\text{ for }\mu\in\mathbb{R}\,.
\end{equation}
Let $\mathcal{M}:=(M,D)$ be an affine manifold. In Equation~(\ref{QEE}), the {\it eigenvalue} $\mu$ is a parameter that needs to be determined. Let
$E_{\mathcal{S}}(\mu)=E(\mu)$ be the vector space of smooth solutions to the linear partial differential Equation~\eqref{QEE}: 
$$
E(\mu):=\{\hat h\in \mathcal{C}^\infty( M):\operatorname{Hes}^D_{\hat h}=\mu\, \hat h\,\rho^D_s\}\,.
$$
Similarly, if $p$ is a point of $M$, let $E(p,\mu)$ be the linear space of all germs of solutions to Equation~\eqref{QEE} based at $p$. 
Let $\mathfrak{A}(p)$ be the Lie algebra of germs of affine Killing vector fields based at $p$.
We summarize as follows some results concerning this equation and refer to a subsequent paper \cite{BGGV16} for the proof. We
work in complete generality and do not restrict to the case of surfaces for the moment.

\begin{theorem}\label{th:affine}
Let $\mathcal{M}$ be an affine manifold of dimension $n$. Let $p\in M$.
\begin{enumerate}
\item If $\hat h$ is a $\mathcal{C}^2$ solution to Equation~(\ref{QEE}), then $\hat h$ is in fact smooth.
If $(M,D)$ is real analytic, then $\hat h$ is real analytic.
\item Let $\hat h\in E(p,\mu)$. If $\hat h(p)=0$ and if $d\hat h(p)=0$, then $\hat h\equiv0$.
\item $\dim\{E(p,\mu)\}\le n+1$.
\item If $X\in\mathfrak{A}(p)$ and if $\hat h\in E(p,\mu)$, then $X\hat h\in E(p,\mu)$.
\item If $\Sigma$ is simply connected and if $\dim\{E(p,\mu)\}$ is constant on $\Sigma$, then any element $\hat h\in E(p,\mu)$ extends
uniquely to an element of $E(\mu)$.
\end{enumerate}\end{theorem}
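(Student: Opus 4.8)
The plan is to prolong the second-order Equation~(\ref{QEE}) to a first-order linear system on the $(n+1)$-dimensional bundle of $1$-jets and to read off all five assertions from the resulting ``finite type'' structure. Writing $(\operatorname{Hes}^D_{\hat h})_{ij}=\partial_i\partial_j\hat h-\Gamma_{ij}{}^k\partial_k\hat h$ in local coordinates, Equation~(\ref{QEE}) expresses every second partial derivative of $\hat h$ in terms of $\hat h$ and its first derivatives. Introducing the $(n+1)$-tuple $H=(\hat h,\hat h_1,\dots,\hat h_n)$ with $\hat h_i:=\partial_i\hat h$, I would rewrite the system as
\begin{equation*}
\partial_j\hat h=\hat h_j,\qquad \partial_j\hat h_i=\Gamma_{ij}{}^k\hat h_k+\mu\,\hat h\,(\rho^D_s)_{ij},
\end{equation*}
so that \emph{all} first derivatives of $H$ are given linearly in $H$, with coefficients built from the connection and its curvature. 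This complete first-order system is the central object from which everything follows.

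Assertion~(1) is then a bootstrap: the right-hand side above is as regular as $H$ itself, so a $C^2$ solution has second derivatives of class $C^1$, hence lies in $C^3$, and inductively in $C^\infty$; when $D$ is real analytic the coefficients are analytic and the classical theory of linear systems of finite type (equivalently, Cauchy--Kovalevskaya applied along rays) yields real-analyticity of $\hat h$. For Assertion~(2), restricting the first-order system to any curve $\gamma$ issuing from $p$ produces a linear ordinary differential equation $\tfrac{d}{dt}H(\gamma(t))=A(t)H(\gamma(t))$; the hypotheses $\hat h(p)=0$ and $d\hat h(p)=0$ force $H(p)=0$, so uniqueness for linear ODEs gives $H\equiv0$ along every such curve and hence $\hat h\equiv0$. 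Assertion~(3) is immediate from Assertion~(2), since the linear evaluation map $\hat h\mapsto(\hat h(p),d\hat h(p))\in\mathbb{R}^{n+1}$ is then injective on $E(p,\mu)$.

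For Assertion~(4) I would argue invariantly rather than in coordinates. If $X\in\mathfrak{A}(p)$, then its local flow consists of affine transformations, so $\mathcal{L}_X D=0$; consequently $\mathcal{L}_X\rho^D_s=0$, because the Ricci tensor is built functorially from $D$, and $\mathcal{L}_X\operatorname{Hes}^D_{\hat h}=\operatorname{Hes}^D_{X\hat h}$, because the correction term in the commutator is a multiple of $\mathcal{L}_X D$ and so vanishes. Applying $\mathcal{L}_X$ to Equation~(\ref{QEE}) then gives $\operatorname{Hes}^D_{X\hat h}=\mu\,(X\hat h)\,\rho^D_s+\mu\,\hat h\,\mathcal{L}_X\rho^D_s=\mu\,(X\hat h)\,\rho^D_s$, whence $X\hat h\in E(p,\mu)$.

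The main obstacle is Assertion~(5), a monodromy statement. The prolonged system defines a connection $\tilde\nabla$ on the trivial rank-$(n+1)$ bundle whose parallel sections are exactly the solutions, the fibre of admissible parallel data at $q$ being $E(q,\mu)$ under the identification of Assertion~(3). The hypothesis that $\dim E(q,\mu)$ equals the constant $m$ makes these fibres into a rank-$m$ subbundle $\mathcal{V}$ that is preserved by $\tilde\nabla$-parallel transport and on which $\tilde\nabla$ restricts to a \emph{flat} connection, since locally an $m$-frame of solutions furnishes a parallel frame of $\mathcal{V}$. On the simply connected $\Sigma$ a flat bundle has trivial monodromy, so parallel transport of the datum $(\hat h(p),d\hat h(p))$ is path-independent and yields a global parallel section of $\mathcal{V}$, that is, a global solution restricting to the given germ; uniqueness of the extension is precisely Assertion~(2). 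The delicate point, where both simple connectivity and constant dimension are genuinely used, is to verify that constancy of $\dim E(q,\mu)$ really produces a flat subbundle rather than merely a transport-invariant family of fibres.
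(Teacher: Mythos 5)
First, a point of order: the paper does not actually prove Theorem~\ref{th:affine}. It states the result and explicitly defers the proofs to the forthcoming paper \cite{BGGV16}, so there is no in-paper argument to compare yours against; I can only assess your proposal on its own terms. That said, your prolongation argument is correct and is the standard (and almost certainly the intended) route for results of this ``finite type'' flavor: rewriting Equation~(\ref{QEE}) as the complete first-order linear system $\partial_j\hat h=\hat h_j$, $\partial_j\hat h_i=\Gamma_{ij}{}^k\hat h_k+\mu\,\hat h\,(\rho^D_s)_{ij}$, i.e.\ as a connection $\tilde\nabla$ on the rank-$(n+1)$ jet bundle whose parallel sections are precisely the solutions. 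Assertions~(1)--(3) then follow exactly as you say (bootstrap, ODE uniqueness along curves, injectivity of the $1$-jet evaluation), and your invariant computation for~(4) is right: $\mathcal{L}_XD=0$ forces $\mathcal{L}_X\rho^D_s=0$, and the identity $\mathcal{L}_X\operatorname{Hes}^D_{\hat h}=\operatorname{Hes}^D_{X\hat h}-d\hat h\circ(\mathcal{L}_XD)$ reduces to $\mathcal{L}_X\operatorname{Hes}^D_{\hat h}=\operatorname{Hes}^D_{X\hat h}$.

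The one place where you flag a difficulty instead of resolving it is the flatness of the subbundle in~(5), and you should note that it closes using your own Assertion~(2). Pick germs $\hat h^1,\dots,\hat h^m$ spanning $E(q,\mu)$, $m=\dim E(\cdot,\mu)$ constant, all defined on a common connected neighborhood $U$ of $q$. For any $q'\in U$ their $1$-jets at $q'$ remain linearly independent: a nontrivial dependence relation would produce a solution on $U$ with vanishing $1$-jet at $q'$, hence vanishing identically on $U$ by Assertion~(2), contradicting independence at $q$. By constancy of dimension these jets therefore span $\mathcal{V}_{q'}$ for every $q'\in U$, so $\mathcal{V}$ is locally spanned by $\tilde\nabla$-parallel sections; it is thus a smooth subbundle, invariant under parallel transport, on which $\tilde\nabla$ is flat. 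Simple connectivity then gives trivial monodromy, the germ extends to a global parallel section of $\mathcal{V}$, and uniqueness of the extension is again Assertion~(2). With that paragraph inserted, your proof is complete; only minor polish is needed elsewhere (e.g.\ in~(1), the analytic case is cleanest phrased as: $H$ agrees along each ray from $p$ with the solution of a linear ODE whose coefficients depend analytically on the ray, so $H$ is analytic by analytic dependence of ODE solutions on time and parameters, rather than by an appeal to Cauchy--Kovalevskaya).
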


The extremal case where $\dim\{E(p,\mu)\}=n+1$ merits additional attention.
\begin{definition}\label{D21}\rm We say that $D$ is {\it projectively flat} if there exists a $1$-form $\omega$ and a flat connection $\tilde D$ so that
$D_XY=\tilde D_XY+\omega(X)Y+\omega(Y)X$ for all $X$ and $Y$; $D$ is projectively
flat if and only if it is possible to choose a coordinate system so that the unparametrized geodesics of $D$ are straight lines.
We say that $D$ is {\it strongly projectively flat} if in addition $\omega$ can be chosen to be closed. 
\end{definition}
The eigenvalue $\mu_n=-\frac1{n-1}$ is distinguished in this subject. It appears, for example, in Example \ref{subsubsect:conf-Einstein}.
The following result relates the analytic properties of the affine quasi-Einstein equation to the underlying affine geometry.

\begin{theorem}\label{T22}
Let $\mathcal{M}$ be an affine manifold of dimension $n$. Let $\mu_n:=-\frac1{n-1}$.
\begin{enumerate}
\item $\mathcal{M}$ is  strongly projectively flat if and only if $\dim\{E(\mu_n)\}=n+1$.
\item If $\dim\{E(\mu)\}=n+1$ for any $\mu$, then $\mathcal{M}$ is strongly projectively flat.
\item If $\dim\{E(\mu)\}=n+1$ for $\mu\ne\mu_n$, then $\mathcal{M}$ is Ricci flat.
\item Suppose $\dim\{E(p,\mu_n)\}=n+1$. One may choose a basis $\{\phi_0,\dots,\phi_n\}$ for $E(p,\mu_n)$ so that
$\phi_0(p)\ne0$ and $\phi_i(p)=0$ for $i>0$. Set $z^i:=\phi_i/\phi_0$. Then $\vec z=(z^1,\dots,z^n)$
is a system of coordinates defined near $p$ such that the unparametrized geodesics of $\mathcal{M}$ are  straight lines.
\end{enumerate}\end{theorem}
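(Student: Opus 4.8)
The plan is to treat Assertion~(4) first, since it furnishes both the coordinate system and the mechanism that drives the other assertions, and then to deduce Assertions~(1)--(3) from it via a prolongation and integrability analysis.

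\textit{Assertion (4) and projective flatness.} The observation that powers everything is obtained by restricting a solution to a geodesic. If $\hat h$ solves Equation~(\ref{QEE}) and $\gamma$ is a geodesic of $D$, then differentiating $\hat h\circ\gamma$ twice and using the geodesic equation to cancel the first-derivative term gives
\[
\tfrac{d^2}{dt^2}(\hat h\circ\gamma)=\operatorname{Hes}^D_{\hat h}(\dot\gamma,\dot\gamma)=\mu\,\rho^D_s(\dot\gamma,\dot\gamma)\,(\hat h\circ\gamma)\,.
\]
Thus along a fixed geodesic every solution satisfies one and the same scalar ODE $\ddot u=\Psi(t)u$ with $\Psi(t):=\mu\,\rho^D_s(\dot\gamma,\dot\gamma)$. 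For the normalized basis $\{\phi_0,\dots,\phi_n\}$ of the statement, the Wronskian $W_i:=\dot\phi_i\phi_0-\phi_i\dot\phi_0$ is then constant along $\gamma$, so $z^i=\phi_i/\phi_0$ obeys $\ddot z^i=-2(\dot\phi_0/\phi_0)\,\dot z^i$; the proportionality factor is independent of $i$, which says precisely that the image of $\gamma$ is a reparametrized straight line. To see that $\vec z$ is a genuine chart near $p$ I would invoke Theorem~\ref{th:affine}(2): the $1$-jet map $\hat h\mapsto(\hat h(p),d\hat h(p))$ is injective on $E(p,\mu)$, hence an isomorphism onto $\mathbb{R}\oplus T_p^*M$ by the dimension hypothesis, so the normalization $\phi_i(p)=0$ for $i>0$ forces $d\phi_1(p),\dots,d\phi_n(p)$ to be a basis of $T^*_pM$ and $dz^i(p)=d\phi_i(p)/\phi_0(p)$ to be a coframe. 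This proves Assertion~(4), and since the argument uses no special value of $\mu$, it shows quite generally that $\dim E(p,\mu)=n+1$ yields projective flatness.

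\textit{Strong projective flatness (Assertions (1) and (2)).} In the straight-line coordinates $\vec z$ the connection has the form $\Gamma_{ij}{}^k=\delta_i^k\theta_j+\delta_j^k\theta_i$ for a $1$-form $\theta$, and a direct computation gives that $\rho^D_a$ is a nonzero constant multiple of $d\theta$ while $\rho^D_s=(n-1)(\theta\otimes\theta-\theta_{(\cdot)})$, where $\theta_{(ij)}:=\tfrac12(\partial_i\theta_j+\partial_j\theta_i)$; thus strong projective flatness (Definition~\ref{D21}) is equivalent to $d\theta=0$. For the converse half of Assertion~(1) and for Assertion~(2) I would prolong Equation~(\ref{QEE}): rewriting it as $\hat h_{ij}=\theta_j\hat h_i+\theta_i\hat h_j+\mu(n-1)\hat h(\theta_i\theta_j-\theta_{(ij)})$ expresses every second derivative of $\hat h$ through the $1$-jet $(\hat h,\hat h_\ell)$ and so defines a linear connection on the rank-$(n+1)$ bundle $J^1$ whose parallel sections are the solutions. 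The dimension hypothesis means this connection is flat, i.e.\ the integrability relation $\partial_m\hat h_{k\ell}=\partial_\ell\hat h_{km}$ holds for every admissible $1$-jet; isolating the coefficient of a spectator first derivative $\hat h_k$ in the antisymmetrization over $(\ell,m)$ leaves the term $(d\theta)_{\ell m}\hat h_k$, and the arbitrariness of the $1$-jet forces $d\theta=0$. For the forward half of Assertion~(1) I would run this in reverse: with $\theta=d\psi$ closed the substitution $\hat h=e^{\psi}v$ reduces Equation~(\ref{QEE}) at $\mu=\mu_n$ to $v_{ij}=0$, so the solutions are $e^{\psi}$ times an affine function and form an $(n+1)$-dimensional family, which with the bound of Theorem~\ref{th:affine}(3) gives equality.

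\textit{Assertion (3).} Here I would first use Assertion~(2) to put $D$ in strongly projectively flat form with $\theta=d\psi$. The substitution $\hat h=e^{\psi}u$ then reduces Equation~(\ref{QEE}) for general $\mu$ to $\operatorname{Hes}^{\tilde D}_u=Mu$ with $M=(\mu+\tfrac1{n-1})\rho^D_s$ in the flat coordinates of $\tilde D$, and this is a bijection of solution spaces. Prolonging $\operatorname{Hes}^{\tilde D}_u=Mu$, the integrability relation reads $(\partial_\ell M_{jk}-\partial_k M_{j\ell})u+M_{jk}u_\ell-M_{j\ell}u_k=0$ for all admissible $(u,u_\bullet)$, and separating the coefficients of the independent first derivatives forces $M_{jk}\equiv0$; since $\mu\ne\mu_n$ makes $\mu+\tfrac1{n-1}$ nonzero this gives $\rho^D_s=0$, which combined with $\rho^D_a=0$ from Assertion~(2) shows $\mathcal{M}$ is Ricci flat. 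The delicate step throughout is the integrability computation producing $d\theta=0$ in Assertion~(2): one must carry out the prolongation carefully and separate the genuinely new antisymmetric term $(d\theta)_{\ell m}$ from the symmetric terms built from $\theta$, $\partial\theta$ and $\theta\otimes\theta$. The clean index-disjoint extraction of the $\hat h_k$-coefficient is available only for $n\ge3$; in the surface case $n=2$ the indices $\{k,\ell,m\}$ cannot be kept distinct, so one must exploit the independence of the full $1$-jet $(\hat h,\hat h_1,\hat h_2)$ across several integrability equations at once to still conclude $(d\theta)_{12}=0$.
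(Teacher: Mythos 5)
A preliminary remark: the paper never proves Theorem~\ref{T22}; like Theorem~\ref{th:affine}, it is stated with the proof deferred to the forthcoming paper \cite{BGGV16}, so your proposal cannot be matched against an in-paper argument and must be judged on its own. On that basis, the local core of what you propose is correct, and I checked the computations it rests on: restricting Equation~(\ref{QEE}) to a geodesic does give the common scalar ODE $\ddot u=\mu\,\rho^D_s(\dot\gamma,\dot\gamma)\,u$, the Wronskian argument does show every geodesic has straight-line image under $\vec z$, and Theorem~\ref{th:affine}(2) makes the $1$-jet evaluation an isomorphism, so $\vec z$ is a chart; in straight-line coordinates one indeed has $\Gamma_{ij}{}^k=\delta_i^k\theta_j+\delta_j^k\theta_i$, $\rho^D_a$ a nonzero constant multiple of $d\theta$, and $\rho^D_s=(n-1)(\theta\otimes\theta-\theta_{(\cdot)})$; the substitution $\hat h=e^{\psi}v$ turns Equation~(\ref{QEE}) into $v_{ij}=(n-1)(\mu-\mu_n)(\psi_i\psi_j-\psi_{ij})\,v$, which is $v_{ij}=0$ exactly at $\mu=\mu_n$; and your extraction of $(d\theta)_{\ell m}$ and of $M_{jk}$ from the prolonged integrability relations is sound. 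Even the $n=2$ case you flag as delicate does close up: at $\mu=-1$ the coefficient of $\hat h_1$ in the single relation $\partial_2\hat h_{11}=\partial_1\hat h_{12}$ works out to $\frac32(\partial_2\theta_1-\partial_1\theta_2)$, so $d\theta=0$ still follows.

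The genuine gap is that you conflate germs with global solutions. Assertions (1)--(3) concern $E(\mu)$, the space of solutions defined on all of $M$, while every step of your argument is local. For the implications running from $\dim\{E(\mu)\}=n+1$ to geometry this is harmless: global solutions restrict injectively to germs at every point (Theorem~\ref{th:affine}(2) plus connectedness), so all $1$-jets are realized everywhere and your integrability argument applies chart by chart; $\rho^D=0$ and $d\theta=0$ are local conclusions. But the forward half of Assertion~(1) is another matter: strong projective flatness hands you the $(n+1)$-dimensional solution space $e^{\psi}(a_0+a_iz^i)$ only on a chart, since $\psi$ and the affine coordinates need not exist globally, and nothing in your proposal glues these local spaces into an $(n+1)$-dimensional space of global solutions. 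This cannot be waved through: the flat cylinder $\mathbb{R}^2/\mathbb{Z}$ is strongly projectively flat in the sense of Definition~\ref{D21} (take $\omega=0$), yet at $\mu_2=-1$ the equation is $\operatorname{Hes}\hat h=0$ and its global solutions are only the translation-invariant affine functions, a $2$-dimensional space rather than $3$. So a complete proof must either interpret the statement at the germ level $E(p,\mu_n)$ throughout, or restrict to (or reduce to) the simply connected case and invoke Theorem~\ref{th:affine}(5), after checking that $\dim\{E(p,\mu_n)\}$ is constant, to extend local solutions; your write-up does neither. The same issue touches the conclusions of Assertions (1)--(2): you produce a flat connection and a closed $\theta$ near each point, whereas Definition~\ref{D21} asks for one globally defined pair $(\tilde D,\omega)$, and that patching is likewise not addressed.
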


We remark that if $\rho_s^D=0$, then $E(\mu)=E(0)$ for any $\mu$. The space $E(0)$ is the space of Yamabe solitons.

\section{Homogeneous surface geometries}\label{S7}
In this section, we examine solutions to the affine quasi-Einstein equation in the context of homogeneous affine surfaces.
Since an affine surface is flat if and only if the Ricci tensor vanishes, all the geometry is encoded in the Ricci tensor. In particular,
a geometry is symmetric if and only if $D\rho^D=0$.

We say that $\mathcal{S}=(\mathbb{R}^2,D)$ is a {\it Type~$\mathcal{A}$ surface model} if the Christoffel symbols $\Gamma_{ij}{}^k$ of
the connection $D$ are constant. Similarly, we say that $\mathcal{S}=(\mathbb{R}^+\times\mathbb{R},D)$ is a {\it Type~$\mathcal{B}$ surface model} if the
Christoffel symbols of the connection $D$ have the form $\Gamma_{ij}{}^k=(x^1)^{-1}C_{ij}{}^k$ for $C_{ij}{}^k$ constant. 
The Ricci tensor of any Type~$\mathcal{A}$ surface model is symmetric; this can fail for Type~$\mathcal{B}$ surface models. Any Type~$\mathcal{A}$
surface model is projectively flat; this can fail for Type~$\mathcal{B}$ surface models. These geometries are
homogeneous; the translations $(x^1,x^2)\rightarrow(x^1+a^1,x^2+a^2)$ act transitively on $\mathbb{R}^2$ and preserve any Type~$\mathcal{A}$
connection. Similarly, the coordinate transformations $(x^1,x^2)\rightarrow(ax^1,ax^2+b)$ for $a>0$ act transitively on $\mathbb{R}^+\times\mathbb{R}$
and preserve any Type~$\mathcal{B}$ connection. Since the geometries are homogeneous, $\dim\{E(p,\mu)\}$ is constant. Since the underlying
topological space is simply connected, we may use Theorem~\ref{th:affine}~(5) to identify the global solutions $E(\mu)$ with the germs of local
solutions $E(p,\mu)$ for any point $p$. Thus for these geometries, there is no difference between the global and the local theory.

The importance of these two geometries lies in the fact that Opozda~\cite{Op04} showed that any locally homogeneous affine surface which is not flat
is modeled on a Type~$\mathcal{A}$ geometry, on a Type~$\mathcal{B}$ geometry, or on the Levi-Civita connection of the sphere.
These categories are not disjoint. A Type~$\mathcal{B}$ surface model $\mathcal{S}$ is locally isomorphic to a
Type~$\mathcal{A}$ surface model if and only if $C_{12}{}^1=C_{22}{}^1=C_{22}{}^2=0$; we refer to \cite{BGG16} for details. For surfaces, the critical
eigenvalue is $\mu_2=-\frac1{n-1}|_{n=2}=-1$. Since any Type~$\mathcal{A}$ surface model is strongly projectively flat, $\dim\{E(-1)\}=3$ 
by Theorem~\ref{T22}. As we assumed that $\mathcal{S}$ is not flat, $\mathcal{S}$ is not Ricci flat and consequently $\dim\{E(\mu)\}\le 2$ for $\mu\ne-1$.

If $\mathcal{S}$ is a Type~$\mathcal{A}$ surface model, then $\partial_{x^1}$ and $\partial_{x^2}$ are affine Killing vector fields and thus $E(\mu)$ is a 
$\operatorname{Span}\{\partial_{x^1},\partial_{x^2}\}$ module by Theorem~\ref{th:affine}~(4). In the Type~$\mathcal{B}$ setting,
$x^1\partial_{x^1}+x^2\partial_{x^2}$ and $\partial_{x^2}$ are affine Killing vector fields and thus
$E(\mu)$ is a $\operatorname{Span}\{x^1\partial_{x^1}+x^2\partial_{x^2},\partial_{x^2}\}$ module. These module structures play a crucial role
in the proof given in \cite{BGGV16} of the following results:

\begin{theorem}
Let $\mathcal{S}$ be a Type~$\mathcal{A}$ surface model. Let $\mu\ne0$.
$$\dim\{E(\mu)\}=\left\{\begin{array}{ll}
3\text{ if }\mu=-1\\
2\text{ if }\mu\ne-1\text{ and }\operatorname{Rank}\{\rho^D\}=1\\
0\text{ if }\mu\ne-1\text{ and }\operatorname{Rank}\{\rho^D\}=2
\end{array}\right\}\,.$$
\end{theorem}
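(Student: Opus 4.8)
The plan is to build on the two facts recorded in the discussion immediately preceding the statement: that $\dim\{E(-1)\}=3$ because a Type~$\mathcal{A}$ model is strongly projectively flat (Assertion~(1) of Theorem~\ref{T22}), and that $\dim\{E(\mu)\}\le2$ for $\mu\ne-1$ because $\mathcal{S}$ is not Ricci flat (Assertion~(3) of Theorem~\ref{T22}). It therefore remains only to treat $\mu\ne0,-1$ and to show that $\dim\{E(\mu)\}=2$ when $\operatorname{Rank}\{\rho^D\}=1$ and $\dim\{E(\mu)\}=0$ when $\operatorname{Rank}\{\rho^D\}=2$. The simplification I would exploit throughout is that on a Type~$\mathcal{A}$ model the Christoffel symbols, and hence the Ricci tensor, are constant, so the affine quasi-Einstein Equation~\eqref{QEE} becomes the constant-coefficient linear system
\begin{equation*}
\partial_{x^i}\partial_{x^j}\hat h-\Gamma_{ij}{}^k\partial_{x^k}\hat h=\mu\,\rho_{ij}\,\hat h,\qquad (i,j)\in\{(1,1),(1,2),(2,2)\},
\end{equation*}
where $\rho_{ij}:=\rho^D_s(\partial_{x^i},\partial_{x^j})$ are constants.

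Next I would reduce membership in $E(\mu)$ to a finite algebraic computation. Since $E(\mu)$ is a module over $\operatorname{Span}\{\partial_{x^1},\partial_{x^2}\}$ and these two operators commute, the complexification $E(\mu)\otimes\mathbb{C}$, if nonzero, contains a joint eigenvector; but a function with $\partial_{x^1}\hat h=a_1\hat h$ and $\partial_{x^2}\hat h=a_2\hat h$ is a scalar multiple of $e^{a_1x^1+a_2x^2}$. Substituting such an exponential into the system shows that $e^{a_1x^1+a_2x^2}\in E(\mu)$ exactly when
\begin{equation*}
a_ia_j-\Gamma_{ij}{}^k a_k=\mu\,\rho_{ij}\qquad\text{for }(i,j)\in\{(1,1),(1,2),(2,2)\}\,.
\end{equation*}
Hence $E(\mu)=0$ if and only if this system of three quadratics in $(a_1,a_2)$ has no complex solution, while distinct solutions yield linearly independent exponentials in $E(\mu)\otimes\mathbb{C}$. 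Given the bound $\dim\{E(\mu)\}\le2$ it then suffices to show that the system has \emph{no} solution when $\operatorname{Rank}\{\rho^D\}=2$, and has \emph{two distinct} solutions when $\operatorname{Rank}\{\rho^D\}=1$ (the bound then forcing equality, and also preventing more than two).

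To settle these two claims I would normalize $\rho^D$ by a linear change of the coordinates $(x^1,x^2)$; such a change keeps the Christoffel symbols constant and transforms $\rho^D$ as a $(0,2)$-tensor, so one may put $\rho^D$ in standard rank-one or rank-two form and then run through the explicit normal forms for Type~$\mathcal{A}$ connections. In the rank-two case I expect the left-hand matrix $a\otimes a$, which has rank at most one, to be incompatible with $\mu\rho^D+(\Gamma_{ij}{}^k a_k)$ once $\mu\ne0,-1$: imposing $\det(a\otimes a)=0$ together with the three scalar equations should leave the empty solution set, forcing $E(\mu)=0$. In the rank-one case the equation in the degenerate direction decouples and, after normalization, reduces the system to a single quadratic for the surviving exponent whose discriminant does not vanish for $\mu\ne0$; the two distinct roots give two distinct joint eigenvalues, whence $\dim\{E(\mu)\}\ge2$ and therefore $\dim\{E(\mu)\}=2$.

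The hardest part will be the algebraic analysis of this quadratic system, carried out uniformly over all Type~$\mathcal{A}$ connections rather than on individual examples. Proving inconsistency in the rank-two case is delicate precisely because the three equations are overdetermined and $\rho^D$ is itself quadratic in the $\Gamma_{ij}{}^k$, so a clean proof seems to require an honest enumeration of the normal forms. In the rank-one case the subtle point is to rule out a coalescence of the two exponents: I must ensure that one genuinely obtains a two-dimensional eigenspace, and not a single eigenvalue whose generalized eigenspace fails to grow. It is here that the hypothesis $\mu\ne0$ and the module structure over the affine Killing fields $\partial_{x^1},\partial_{x^2}$ must be used carefully to control the Jordan data and keep the count from dropping below two.
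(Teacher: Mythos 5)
First, a caveat about the comparison: the paper does not actually prove this theorem in-text; it states it with a pointer to the companion paper \cite{BGGV16}. What the paper itself supplies is precisely the scaffolding you also invoke: $\dim\{E(-1)\}=3$ from strong projective flatness and Theorem~\ref{T22}, the bound $\dim\{E(\mu)\}\le 2$ for $\mu\ne-1$ from Theorem~\ref{T22}~(3) (since the model is not Ricci flat), and the fact that $E(\mu)$ is a module over $\operatorname{Span}\{\partial_{x^1},\partial_{x^2}\}$ by Theorem~\ref{th:affine}~(4), which the paper says is crucial in the cited proof. Your reduction is the intended one and is correct as far as it goes: the two Killing fields commute, $E(\mu)$ is finite dimensional, so if $E(\mu)\ne 0$ its complexification contains a joint eigenvector, necessarily an exponential $e^{a_1x^1+a_2x^2}$, and membership becomes the quadratic system $a_ia_j-\Gamma_{ij}{}^ka_k=\mu\rho_{ij}$.

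The gap lies in the two claims you leave as expectations, and one of them is false as stated. In the rank-one case you propose to show that the surviving quadratic has ``discriminant [that] does not vanish for $\mu\ne0$,'' so that two distinct exponents force $\dim\{E(\mu)\}\ge2$. This fails: take the Type~$\mathcal{A}$ model with $\Gamma_{11}{}^1=3$, $\Gamma_{12}{}^2=\Gamma_{21}{}^2=1$ and all other symbols zero. Then $\rho^D=2\,dx^1\otimes dx^1$ has rank one, the system forces $a_2=0$ and $a_1^2-3a_1=2\mu$, and the discriminant $9+8\mu$ vanishes at $\mu=-\frac{9}{8}\notin\{0,-1\}$. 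At that eigenvalue there is only one exponential solution, $e^{3x^1/2}$; the second solution is $x^1e^{3x^1/2}$, a polynomial-times-exponential coming from a Jordan block, and only with it does one get $\dim\{E(-\frac{9}{8})\}=2$. So counting distinct exponents cannot close the rank-one case: you must either carry out the generalized-eigenvector analysis you allude to, or argue differently (for instance, in this normal form the full system forces $\hat h=A(x^1)+x^2B(x^1)$ with $B=0$ when $\mu\ne-1$, and reduces to the single constant-coefficient ODE $A''-3A'=2\mu A$, whose solution space is two-dimensional for every $\mu$, discriminant or not). Similarly, the rank-two claim that the quadratic system is inconsistent for $\mu\ne0,-1$ is the actual content of the theorem and is only asserted (``I expect \dots should leave the empty solution set''); it genuinely requires the enumeration of Type~$\mathcal{A}$ normal forms, or some uniform replacement, which you do not carry out. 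In short, your proposal reproduces the paper's stated strategy and the correct key lemma, but the dichotomy itself --- the heart of the statement --- remains unproved, and the specific mechanism you propose for the rank-one half would break at coalescent eigenvalues.
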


\begin{theorem}\label{T24}
Let $\mathcal{S}$ be a Type $\mathcal{B}$ surface model which is not also Type~$\mathcal{A}$. 
If $E(\mu)\ne\{0\}$, then up to linear equivalence, one of the following holds:
\begin{enumerate}
\item $C_{22}{}^1=\pm1$, $C_{12}{}^1=0$, $C_{22}{}^2=\pm2C_{11}{}^2$, $\Delta:=-C_{11}{}^1+C_{12}{}^2+1\ne0$, 
$\mu=\Delta^{-2}\{-(C_{11}{}^1)^2+2 C_{11}{}^1 C_{12}{}^2+2 (C_{11}{}^2)^2-(C_{12}{}^2)^2+2 C_{12}{}^2+1\}$.
\item $C_{22}{}^1=0$, $C_{12}{}^1=C_{22}{}^2\neq 0$, $\mu=-1$.
\end{enumerate}
\end{theorem}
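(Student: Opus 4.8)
The plan is to analyze the affine quasi-Einstein equation $\operatorname{Hes}^D_{\hat h}=\mu\,\hat h\,\rho^D_s$ directly in coordinates for a Type~$\mathcal{B}$ surface model, exploiting the module structure provided by Theorem~\ref{th:affine}~(4). Since $x^1\partial_{x^1}+x^2\partial_{x^2}$ and $\partial_{x^2}$ are affine Killing vector fields, $E(\mu)$ is a module over the Lie algebra they generate. First I would write out the equation $\operatorname{Hes}^D_{\hat h}=\mu\,\hat h\,\rho^D_s$ as a system of three second-order PDEs (the $(1,1)$, $(1,2)$, and $(2,2)$ components of the symmetric Hessian), using $\Gamma_{ij}{}^k=(x^1)^{-1}C_{ij}{}^k$ and the resulting formula for $\rho^D_s$. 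Because the Christoffel symbols scale like $(x^1)^{-1}$, it is natural to look for solutions of the form $\hat h=(x^1)^\alpha P(x^2/x^1)$ or, more simply, to use the action of $\partial_{x^2}$ to reduce to an ODE in the $x^1$-direction; the homogeneity under $(x^1,x^2)\to(ax^1,ax^2)$ strongly suggests separable or quasi-homogeneous ansätze.

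Next I would impose the integrability conditions coming from the module structure. The operator $\partial_{x^2}$ preserves $E(\mu)$, so $E(\mu)$ decomposes into generalized eigenspaces of $\partial_{x^2}$; combined with the homogeneity operator $x^1\partial_{x^1}+x^2\partial_{x^2}$ this pins down the admissible exponents. The key step is to extract, from the requirement that a nonzero solution exist, algebraic relations among the constants $C_{ij}{}^k$. I expect that the $(1,1)$-component will force a relation that, after eliminating derivatives, isolates the combination $\Delta=-C_{11}{}^1+C_{12}{}^2+1$ and the expression for $\mu$ appearing in Assertion~(1); the denominator $\Delta^2$ signals that one is solving a quadratic or dividing by $\Delta$, so the case $\Delta=0$ must be separated, and I anticipate that it collapses into the degenerate family of Assertion~(2) where $C_{22}{}^1=0$ and $C_{12}{}^1=C_{22}{}^2\ne0$ with $\mu=-1$.

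The normalization ``up to linear equivalence'' is what lets me reduce the number of free parameters: the residual linear changes of coordinates compatible with the Type~$\mathcal{B}$ structure act on the $C_{ij}{}^k$, and I would use this action to set $C_{22}{}^1\in\{0,\pm1\}$ and to simplify $C_{12}{}^1$. After fixing a normal form, the existence of a solution becomes a finite system of polynomial equations in the remaining constants, and solving it produces exactly the two cases listed, together with the stated formula for $\mu$ and the side condition $C_{22}{}^2=\pm2C_{11}{}^2$.

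The hard part will be the bookkeeping in the integrability analysis: equating mixed partial derivatives of $\hat h$ obtained from the three Hessian components generates several compatibility constraints, and disentangling which constraints fix the geometry (the $C_{ij}{}^k$) versus which merely fix the solution $\hat h$ requires care. I expect the main obstacle to be organizing the case distinction cleanly — in particular verifying that no spurious sporadic solutions survive outside the two families, and confirming that the boundary case $\Delta=0$ genuinely yields the second family rather than an empty or overlapping case. Since the detailed computation is deferred to \cite{BGGV16}, here I would only indicate this reduction and the role of the module structure, leaving the explicit polynomial elimination to that reference.
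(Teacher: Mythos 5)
The first thing to note is that the paper does not actually prove Theorem~\ref{T24}: it states only that the module structure of $E(\mu)$ over $\operatorname{Span}\{x^1\partial_{x^1}+x^2\partial_{x^2},\partial_{x^2}\}$ ``plays a crucial role in the proof given in \cite{BGGV16}'' and defers everything else to that forthcoming reference. Your proposal identifies exactly that ingredient (Theorem~\ref{th:affine}~(4) applied to the two affine Killing fields of a Type~$\mathcal{B}$ model), so you are pointed in the same direction as the authors. But since you also defer the elimination to \cite{BGGV16}, what you have written is a strategy outline, not a proof: the steps that actually produce the content of the theorem --- the formula for $\mu$, the constraint $C_{22}{}^2=\pm2C_{11}{}^2$, the normalization $C_{22}{}^1\in\{0,\pm1\}$, and the verification that no other families survive --- are only ``expected'' and ``anticipated'', never derived.

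Beyond incompleteness, one of your structural predictions conflicts with the statement itself. You anticipate that the boundary case $\Delta=0$ ``collapses into'' Assertion~(2). That cannot be right as stated: the two assertions are distinguished by the value of $C_{22}{}^1$, which is fixed by the linear-equivalence normalization ($C_{22}{}^1=\pm1$ in case~(1) versus $C_{22}{}^1=0$ in case~(2)), so the degenerate sub-case $\Delta=0$ of the branch $C_{22}{}^1=\pm1$ cannot migrate to the branch $C_{22}{}^1=0$; the condition $\Delta\ne0$ in Assertion~(1) is a side condition for existence of nonzero solutions, i.e.\ $\Delta=0$ there is an empty case, not a relabelled one. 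A second caution: your quasi-homogeneous ansatz $\hat h=(x^1)^\alpha P(x^2/x^1)$ is too restrictive if you keep only pure eigenvectors of the Killing fields, since generalized eigenvectors produce logarithmic terms --- this is visible in the paper's own proof of Theorem~\ref{T26}, where the general element of $E_\kappa(\mu)$ a priori contains a $(x^1+x^2)\log(x^1+x^2)$ term that must be eliminated by explicit computation. Any complete argument has to account for such terms before concluding that existence of a nonzero solution reduces to a finite polynomial system in the $C_{ij}{}^k$.
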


Undoing the transformation from Equation~\eqref{eq:QEE-affine} to Equation~\eqref{QEE}, we see that $\mu=-1$ corresponds to the conformally Einstein case
of Example~\ref{subsubsect:conf-Einstein}. We give the following example to illustrate Theorem~\ref{T24} and we refer to \cite{BGGV16} for an explicit
description of the relevant eigenspaces; we will generalize this example subsequently in Section~\ref{S8}.

We consider the following family of Type~$\mathcal{B}$ surface models. It is convenient to change notation slightly to simplify the computations:

\begin{definition}\label{E25}\rm
Let $S:=\{(x^1,x^2)\in\mathbb{R}^2:x^1+x^2>0\}$. For $0\ne\kappa\in\mathbb{R}$, let $\mathcal{S}_\kappa=(S,D_\kappa)$ be a Type~$\mathcal{B}$ surface
model where the nonzero Christoffel symbols of $D_\kappa$ are given by taking $\Gamma_{11}{}^1=\Gamma_{22}{}^2=\kappa\,(x^1+x^2)^{-1}$. Let
$E_\kappa(\mu)$ be the associated eigenspace of the affine quasi-Einstein equation on $\mathcal{S}_\kappa$.
\end{definition}

\begin{theorem}\label{T26}  The Ricci tensor of $\mathcal{S}_k$ is recurrent. The affine surface $\mathcal{S}_k$
is a symmetric space if and only if $\kappa=-2$. One has:
\begin{enumerate}
\item $E_\kappa(0)=\mathbb{R}$.
\item If $\kappa=-2$ and $\mu=-1$, then $E_\kappa(\mu)=(x^1+x^2)^{-1}\operatorname{Span}\{1,x^1-x^2\}$.
\item If $\kappa\ne-2$ and $\mu=\kappa+1$, then $E_\kappa(\mu)=(x^1+x^2)^{\kappa+1}\cdot\mathbb{R}$.
\item If $\mu\ne0$ and $\mu\ne\kappa+1$, then $E_\kappa(\mu)=\{0\}$.
\end{enumerate}\end{theorem}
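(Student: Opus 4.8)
The plan is to compute the Ricci tensor of $\mathcal{S}_\kappa$ explicitly from the Christoffel symbols, verify the recurrence and symmetric-space claims, and then solve the affine quasi-Einstein equation $\operatorname{Hes}^D_{\hat h} = \mu\,\hat h\,\rho^D_s$ directly in coordinates. Since only $\Gamma_{11}{}^1 = \Gamma_{22}{}^2 = \kappa(x^1+x^2)^{-1}$ are nonzero, the curvature computation is short. Writing $r := x^1 + x^2$, I would first compute $\rho^D$ and confirm it is recurrent, i.e.\ $D\rho^D = \omega \otimes \rho^D$ for some $1$-form $\omega$; recurrence is equivalent to the (projective) curvature being a rank-one symmetric tensor here, and the symmetric-space condition $D\rho^D = 0$ should force a specific value of $\kappa$. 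I expect the computation to yield $\rho^D_s$ proportional to a fixed symmetric matrix times a power of $r$, with the symmetric case $D\rho^D = 0$ occurring exactly at $\kappa = -2$.

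\textbf{Next,} with $\rho^D_s$ in hand, I would write out the three scalar equations coming from $\operatorname{Hes}^D_{\hat h}(\partial_{x^i},\partial_{x^j}) = \mu\,\hat h\,\rho^D_s(\partial_{x^i},\partial_{x^j})$ for $(i,j) \in \{(1,1),(1,2),(2,2)\}$, using $\operatorname{Hes}^D_{\hat h}(\partial_{x^i},\partial_{x^j}) = \partial_{x^i}\partial_{x^j}\hat h - \Gamma_{ij}{}^k \partial_{x^k}\hat h$. The trivial case $E_\kappa(0) = \mathbb{R}$ follows immediately since $\mu = 0$ reduces the equation to $\operatorname{Hes}^D_{\hat h} = 0$, whose constant solutions are evident, and the bound $\dim E_\kappa(0) \le n+1 = 3$ together with the module structure and the rank of $\rho^D$ should pin down $E_\kappa(0)$ exactly as $\mathbb{R}$. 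For the nontrivial eigenspaces, the homogeneity of $\mathcal{S}_\kappa$ under $(x^1,x^2) \mapsto (ax^1, ax^2)$ suggests that solutions are homogeneous in $r$, so I would posit $\hat h = r^\alpha \cdot (\text{function of } x^1 - x^2)$ and substitute; this is where the module structure from Theorem~\ref{th:affine}~(4), namely that $x^1\partial_{x^1} + x^2\partial_{x^2}$ and $\partial_{x^2}$ act on $E_\kappa(\mu)$, constrains the possible solutions and lets me solve an ODE in one variable rather than a PDE.

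\textbf{The main obstacle} will be isolating precisely which eigenvalues $\mu$ admit nonzero solutions and showing the eigenspaces are \emph{exactly} as stated (no more, no less). The forward direction — verifying that the listed functions lie in $E_\kappa(\mu)$ — is a routine substitution. The converse, that $E_\kappa(\mu) = \{0\}$ for $\mu \ne 0, \kappa+1$ (and the exceptional behavior at $\kappa = -2$, $\mu = -1$), requires showing the ODE system forces the homogeneity exponent $\alpha$ to satisfy an indicial equation whose only admissible roots give the claimed values. I expect the consistency conditions among the three Hessian equations to produce a quadratic (or lower-degree) constraint on $\alpha$ tied to $\kappa$; at the symmetric value $\kappa = -2$ this quadratic should degenerate or admit the extra solution $x^1 - x^2$, explaining the two-dimensional eigenspace in Assertion~(2). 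Throughout I would appeal to Theorem~\ref{th:affine}~(3), $\dim\{E(p,\mu)\} \le n+1 = 3$, and to the non-flatness of $\mathcal{S}_\kappa$ (hence $\dim E_\kappa(\mu) \le 2$ for $\mu \ne -1$ as noted in Section~\ref{S7}) to ensure no solutions are overlooked and that the dimension counts close up.
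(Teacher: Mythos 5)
Your overall strategy is the one the paper itself follows, and much of it is sound. The curvature computation does give
$\rho^D=\kappa(x^1+x^2)^{-2}(dx^1\otimes dx^2+dx^2\otimes dx^1)$ and
$D\rho^D=-\frac{2+\kappa}{x^1+x^2}(dx^1+dx^2)\otimes\rho^D$, which settles recurrence and the symmetric-space claim; for $\mu=0$ the equation $\operatorname{Hes}^D_{\hat h}=0$ forces $\hat h$ constant (from $\partial_{x^1}\partial_{x^2}\hat h=0$ and the two diagonal equations), so $E_\kappa(0)=\mathbb{R}$ without any appeal to dimension bounds. One correction to your use of the Killing algebra: in these coordinates the affine Killing fields are $x^1\partial_{x^1}+x^2\partial_{x^2}$ and $\partial_{x^1}-\partial_{x^2}$; the field $\partial_{x^2}$ alone does \emph{not} preserve the symbols, which depend on $x^1+x^2$. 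Applied to this two-dimensional solvable algebra, Lie's theorem yields exactly what the paper extracts from the module structure: if $E_\kappa(\mu)\neq\{0\}$, some power $(x^1+x^2)^\alpha$ lies in $E_\kappa(\mu)$, and the indicial system $\alpha(\alpha-\kappa-1)=0$, $\alpha^2-\alpha=\mu\kappa$ forces $\mu\in\{0,\kappa+1\}$. This proves assertion~(4), precisely as in the paper.

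The genuine gap is in the exactness claims~(2) and~(3). The module structure produces \emph{one} solution of power form; it does not say that every solution is separable of the form $(x^1+x^2)^\alpha u(x^1-x^2)$, and your fallback --- the bounds $\dim E\le 3$, respectively $\le 2$ --- can never close this, since exhibiting some solutions plus an upper bound does not exclude further, non-separable ones. The failure is not hypothetical. Write $r=x^1+x^2$ and integrate the system directly: the $(1,1)$ and $(2,2)$ components give $\partial_{x^1}\hat h=F(x^2)\,r^{\kappa}$ and $\partial_{x^2}\hat h=G(x^1)\,r^{\kappa}$, and substituting into the $(1,2)$ component yields the relation $(\kappa+2)\,r\,F'(x^2)=\kappa(\mu-\kappa-1)F(x^2)$. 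For $\kappa\ne-2$ this forces $F$ and $G$ constant and gives $E_\kappa(\kappa+1)=(x^1+x^2)^{\kappa+1}\mathbb{R}$, which rescues assertion~(3) rigorously. But for $\kappa=-2$, $\mu=-1$ the relation is vacuous, $F$ may be an arbitrary quadratic polynomial, and the solution space is \emph{three}-dimensional: besides $r^{-1}$ and $(x^1-x^2)r^{-1}$ one finds
\begin{equation*}
\hat h=\frac{x^1x^2}{x^1+x^2},\qquad
\partial_{x^1}\hat h=\frac{(x^2)^2}{r^2},\qquad
\partial_{x^2}\hat h=\frac{(x^1)^2}{r^2},
\end{equation*}
for which $\operatorname{Hes}^D_{\hat h}(\partial_{x^i},\partial_{x^i})=0$ and
$\operatorname{Hes}^D_{\hat h}(\partial_{x^1},\partial_{x^2})=2x^1x^2r^{-3}=\mu\hat h\,\rho^D(\partial_{x^1},\partial_{x^2})$, as one checks in two lines. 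This solution is quadratic in $x^1-x^2$ and therefore invisible to your ansatz --- and also to the general form the paper quotes from \cite{BGGV16}, which is linear in $x^1-x^2$; the paper's own proof stumbles at the same point. So your method would ``confirm'' assertion~(2) only because the ansatz silently discards a solution: in fact $\dim E_{-2}(-1)=3$, saturating the bound of Theorem~\ref{th:affine}~(3). This is moreover exactly what Theorem~\ref{T22}~(1) demands, since $\mathcal{S}_{-2}$ is the Levi-Civita connection of a Lorentzian metric of constant curvature, hence strongly projectively flat.

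The repair for your proposal is therefore to abandon the separable ansatz at the classification step and use the direct integration above: it is elementary, it proves assertions~(1), (3) and~(4) completely, and it shows that assertion~(2) must be amended to include the extra generator $x^1x^2(x^1+x^2)^{-1}$.
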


\begin{proof}  We note that $\mathcal{S}_{-2}$ is isomorphic to the Lorentzian hyperbolic plane. We show that the Ricci tensor is recurrent and that
$\mathcal{S}_\kappa$ is symmetric if and only if $\kappa=-2$ by computing:
\[
\rho^D=\frac{\kappa}{(x^1+x^2)^{2}}\left(\begin{array}{ll}0&1\\1&0\end{array}\right),\quad
D\rho^D=-\frac{2+\kappa}{x^1+x^2}(dx^1+dx^2)\otimes\rho^D\,.
\]
One can use the structure of $E_\kappa(\mu)$ as a
module over the Lie algebra of affine Killing vector fields to show that if  $E_\kappa(\mu)$ is non-trivial, then there exists $\alpha\in\mathbb{C}$ so that
$\hat h:=(x^1+x^2)^\alpha\in E_\kappa(\mu)$. We compute
$$
\Hes^D_{\hat h}-\mu \hat h \rho^D= (x^1+x^2)^{\alpha-2}\left(
\begin{array}{cc}
\alpha  (\alpha -\kappa -1) & \alpha ^2-\alpha -\mu  \kappa  \\
\alpha ^2-\alpha -\mu  \kappa  & \alpha  (\alpha -\kappa -1) \\
\end{array}
\right)\,.
$$
Since $\kappa\ne0$, if $\mu\ne0$ then $\alpha\ne0$ and thus $\alpha=\mu=\kappa+1$. So $E_\kappa(0)$ and $E_\kappa(\kappa+1)$ are the only
non-trivial eigenspaces; this is in agreement with Theorem~\ref{T24} where there is at most 1 non-trivial eigenvalue $\mu\ne0$.
The module structure is used in \cite{BGGV16} to find the general form of an element of $E(\mu)$ for Type~$\mathcal{A}$ and Type~$\mathcal{B}$ surface
models.  When those results are applied to the setting at hand, we may conclude that $E_\kappa(\mu)$ is spanned by elements of the form:
$$
\hat h(x^1,x^2)=(x^1+x^2)^\beta\{c_1(x^1+x^2)+c_2(x^1-x^2)+c_3(x^1+x^2)\log(x^1+x^2)\}\,.
$$
A computer aided calculation yields that $c_3=0$. Furthermore, $\beta=\kappa+1$,  if $c_2\ne0$, and $\beta=\kappa$, if $c_2=0$. A careful analysis of the different cases then yields the remainder of Theorem~\ref{T26}.\end{proof}

\section{Inhomogeneous examples}\label{S8}
In Section~\ref{S7}, we exhibited a number of homogeneous examples illustrating different phenomena related to the
affine quasi-Einstein equation (Equation~\eqref{QEE}). In this section, we exhibit some inhomogeneous examples.
We begin with a useful ansatz; we shall suppose $n=2$ but it is valid for general $n$. Let $\phi_i:=\partial_{x^i}\phi$, $\phi_{ij}:=\partial_{x^j}\phi_i$, etc.
We omit the details of the following computation as it is entirely elementary:
\begin{lemma}\label{L27}
Let $\mathcal{O}$ be a simply connected open subset of $\mathbb{R}^2$. Let $\phi\in \mathcal{C}^\infty(\mathcal{O})$ and let $\mathcal{M}_\phi=(\mathcal{O},D^\phi)$
where $D^\phi$ is the affine connection on $\mathcal{O}$ with $\Gamma_{ii}{}^i=\phi_{ii}/\phi_i$ and $\Gamma_{ij}{}^k=0$.
Then $\phi\in E(\mu)$ if and only if $\phi$ satisfies the non-linear partial differential equation:
\begin{equation}\label{E24}
0=\frac{1}{2} \mu  \phi  \left(\frac{\phi_{122}}{\phi_{2}}+\frac{\phi_{112}}{\phi_{1}}\right)
+\phi_{12} \left(1-\frac{1}{2} \mu  \phi  \left(\frac{\phi_{22}}{\phi_{2}^2}+\frac{\phi_{11}}{\phi_{1}^2}\right)\right)\,.
\end{equation}
\end{lemma}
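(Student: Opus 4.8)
The plan is to reduce the tensorial identity $\operatorname{Hes}^D_\phi=\mu\,\phi\,\rho^D_s$ to its scalar components in the coordinates $(x^1,x^2)$ and to observe that, for the special connection $D^\phi$, the diagonal components are satisfied automatically, so that a single mixed component remains. First I would compute the Hessian from $\operatorname{Hes}^D_\phi(\partial_{x^i},\partial_{x^j})=\phi_{ij}-\Gamma_{ij}{}^k\phi_k$. Because the only nonzero Christoffel symbols are $\Gamma_{ii}{}^i=\phi_{ii}/\phi_i$, the diagonal entries collapse, $\operatorname{Hes}^D_\phi(\partial_{x^i},\partial_{x^i})=\phi_{ii}-(\phi_{ii}/\phi_i)\,\phi_i=0$, while the mixed entry is simply $\operatorname{Hes}^D_\phi(\partial_{x^1},\partial_{x^2})=\phi_{12}$. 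This vanishing of the diagonal is exactly the design feature that makes the ansatz tractable.

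Next I would compute the Ricci tensor of $D^\phi$ using the sign convention of Section~\ref{S1.5}. Writing $a:=\Gamma_{11}{}^1=\phi_{11}/\phi_1$ and $b:=\Gamma_{22}{}^2=\phi_{22}/\phi_2$ and substituting into the component formula for $\rho^D$, one finds that the diagonal entries vanish and the off-diagonal entries are $\rho^D(\partial_{x^1},\partial_{x^2})=-\partial_{x^1}b$ and $\rho^D(\partial_{x^2},\partial_{x^1})=-\partial_{x^2}a$; the Ricci tensor is genuinely non-symmetric here, consistent with the warning in Section~\ref{S1.5}, so it is the symmetric part $\rho^D_s$ that enters the equation. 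Symmetrizing, applying the quotient rule to $\partial_{x^1}b$ and $\partial_{x^2}a$, and using equality of mixed partials (so that $\partial_{x^1}\phi_{22}=\phi_{122}$ and $\partial_{x^2}\phi_{11}=\phi_{112}$), I obtain
\[
\rho^D_s(\partial_{x^1},\partial_{x^2})=-\frac12\left(\frac{\phi_{112}}{\phi_1}+\frac{\phi_{122}}{\phi_2}\right)
+\frac12\,\phi_{12}\left(\frac{\phi_{11}}{\phi_1^2}+\frac{\phi_{22}}{\phi_2^2}\right),
\]
together with $\rho^D_s(\partial_{x^1},\partial_{x^1})=\rho^D_s(\partial_{x^2},\partial_{x^2})=0$.

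Finally I would assemble the equation. The $(1,1)$ and $(2,2)$ components of $\operatorname{Hes}^D_\phi=\mu\,\phi\,\rho^D_s$ both read $0=0$ and impose nothing, so $\phi\in E(\mu)$ is equivalent to the single mixed component $\phi_{12}=\mu\,\phi\,\rho^D_s(\partial_{x^1},\partial_{x^2})$; inserting the expression above and transposing all terms to one side reproduces Equation~\eqref{E24} verbatim. I expect no genuine obstacle here, as the computation is elementary bookkeeping; the only points demanding care are to respect the curvature sign convention of Section~\ref{S1.5} and to symmetrize $\rho^D$ correctly, since the non-symmetry of the Ricci tensor of $D^\phi$ means that $\rho^D_s$, and not $\rho^D$, is what must be matched against the Hessian.
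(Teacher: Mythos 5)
Your computation is correct and is exactly the "entirely elementary" verification that the paper explicitly omits: the diagonal components of both $\operatorname{Hes}^D_\phi$ and $\rho^D_s$ vanish by the design of the ansatz, and the single mixed component $\phi_{12}=\mu\,\phi\,\rho^D_s(\partial_{x^1},\partial_{x^2})$, with $\rho_{12}=-\partial_{x^1}(\phi_{22}/\phi_2)$ and $\rho_{21}=-\partial_{x^2}(\phi_{11}/\phi_1)$ under the paper's curvature conventions, rearranges precisely into Equation~(\ref{E24}). Your signs for the (non-symmetric) Ricci tensor and the symmetrization step check out, so the proposal matches the intended proof.
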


We will use the Cauchy-Kovalevskaya Theorem (see, for example, Evans \cite{Evans}) to construct solutions to Equation~(\ref{E24}) and thereby show
Lemma~\ref{L27} is non-trivial. Fix a point $p\in\mathbb{R}^2$. Let $c=\phi(p)$, $c_i=\phi_i(p)$, and so forth.
Let $\vec{c}:=\mathcal{J}_k(\phi)(p)$ be the $k$-jet of $\phi$ at $p$. For example
$$
\mathcal{J}_3=\{\vec c=(c,c_1,c_2,c_{11},c_{12},c_{22},c_{111},c_{112},c_{122},c_{222})\}\subset\mathbb{R}^{10}\,.
$$
Let $\tilde{\mathcal{J}}_k$ be the subset of $\mathcal{J}_k$ where $c\ne0$, $c_1\ne0$, $c_2\ne0$, and where the relations imposed
by differentiating Equation~(\ref{E24}) are satisfied. For example, $\tilde{\mathcal{J}}_2$ is the open dense subset of $\mathbb{R}^6$
with 8 path components obtained by deleting 3 hyperplanes. If we fix an element $\xi_2\in\tilde{\mathcal{J}}_2$, then the relation of Equation~(\ref{E24}) is
linear in the third derivatives of $\phi$ and thus the natural projection $\tilde{\mathcal{J}}_3\rightarrow\tilde{\mathcal{J}}_2$ is a real analytic
3-dimensional vector bundle.  If we fix an element $\xi_3\in\tilde{\mathcal{J}}_3$, there are two linearly independent relations in the 4-jets of $\phi$ which arise
from differentiating Equation~(\ref{E24}) and the natural projection $\tilde{\mathcal{J}}_4\rightarrow\tilde{\mathcal{J}}_3$ is again a 3-dimensional real analytic
vector bundle. Arguing similarly, we see that $\tilde{\mathcal{J}}_k$ is a real analytic manifold of dimension $3k$. Assume that the symmetric Ricci tensor 
is non-degenerate. Let $\rho_{ij}$ be the components of $\rho^D_s$, let $\rho^{ij}$ be the components of the inverse matrix, and let $\rho_{ij;k}$ be the
components of the covariant derivatives of the symmetric Ricci tensor.
We define a scalar invariant $\mathcal{E}$ of such a geometry by setting
$$
\mathcal{E}:=\rho^{ia}\rho^{jb}\rho^{kc}\rho_{ij;k}\rho_{ab;c}\,.
$$
\begin{theorem}
\ \begin{enumerate}
\item Given $\xi\in\tilde{\mathcal{J}}_k$, there exists the germ of a function $\phi$ with $ \mathcal{J}_k(\phi)=\xi$ solving Equation~(\ref{E24}).
\item If $k\ge3$, there is an open dense subset $\mathcal{O}_k$ of $\tilde{\mathcal{J}}_k$ so that if $ \mathcal{J}_k(\phi)\in\mathcal{O}_k$,
then the Ricci tensor is not symmetric.
\item Let $\mu\ne-1$. If $k\ge5$, there is an open dense subset $\mathcal{O}_k$ of $\tilde{\mathcal{J}}_k$ so that if $J_k(\phi)\in\mathcal{O}_k$,
then $\rho_s^D$ is non degenerate, $d\mathcal{E}\ne0$, and the geometry is not homogeneous.
\end{enumerate}\end{theorem}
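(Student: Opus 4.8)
The plan is to treat the three assertions separately: Assertion~(1) by the Cauchy--Kovalevskaya Theorem, and Assertions~(2) and (3) by open--dense (transversality) arguments on the real analytic jet bundles $\tilde{\mathcal{J}}_k$. For Assertion~(1) I would first rewrite Equation~\eqref{E24} in the transparent form
\[
\tfrac12\mu\phi\bigl(\partial_{x^1}\Gamma_{22}{}^2+\partial_{x^2}\Gamma_{11}{}^1\bigr)+\phi_{12}=0,
\]
obtained by observing that $\partial_{x^2}\Gamma_{11}{}^1=\partial_{x^2}(\phi_{11}/\phi_1)$ and $\partial_{x^1}\Gamma_{22}{}^2=\partial_{x^1}(\phi_{22}/\phi_2)$. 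This is a third order equation whose only third order terms are $\phi_{112}$ and $\phi_{122}$, so its principal symbol is $\sigma(\xi)=\tfrac12\mu\phi\,\xi_1\xi_2(\xi_1/\phi_1+\xi_2/\phi_2)$. Since $\mu\ne0$ and $c\ne0,\ c_1\ne0,\ c_2\ne0$ at the base point $p$, the characteristic conormals are confined to the three lines $\xi_1=0$, $\xi_2=0$ and $\xi_1/c_1+\xi_2/c_2=0$; a generic conormal $\nu$ therefore determines a line $\ell$ through $p$ that is non-characteristic near $p$, and along $\ell$ the equation solves for $\partial_\nu^3\phi$ with real analytic coefficients. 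Given $\xi\in\tilde{\mathcal{J}}_k$ I would read off the $k$-jets at $p$ of the Cauchy data $\phi|_\ell$, $\partial_\nu\phi|_\ell$, $\partial_\nu^2\phi|_\ell$, extend them to analytic functions on $\ell$, and invoke Cauchy--Kovalevskaya to obtain an analytic germ $\phi$ solving \eqref{E24}. That $\mathcal{J}_k(\phi)=\xi$ then follows because the $k$-jet of a solution and the jet $\xi$ are both recovered from the same Cauchy data by the triangular recursion of differentiating \eqref{E24} and solving for normal derivatives; this is exactly the content of the vector bundle $\tilde{\mathcal{J}}_k\to\tilde{\mathcal{J}}_{k-1}$, whose three dimensional fibres correspond to the three free Cauchy coefficients (the top tangential derivatives) $\partial_s^{k}\phi$, $\partial_s^{k-1}\partial_\nu\phi$, $\partial_s^{k-2}\partial_\nu^2\phi$ at each order.

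For Assertion~(2) the starting point is the direct computation that for the connection $D^\phi$ one has
\[
\rho^{D}=\begin{pmatrix}0&-\partial_{x^1}\Gamma_{22}{}^2\\ -\partial_{x^2}\Gamma_{11}{}^1&0\end{pmatrix},
\]
so $\rho^{D}$ is symmetric at a point exactly when $\partial_{x^1}\Gamma_{22}{}^2=\partial_{x^2}\Gamma_{11}{}^1$ there. The rewritten Equation~\eqref{E24} constrains only the sum $\partial_{x^1}\Gamma_{22}{}^2+\partial_{x^2}\Gamma_{11}{}^1=-2\phi_{12}/(\mu\phi)$, a function of the $2$-jet, so the difference $A:=\partial_{x^1}\Gamma_{22}{}^2-\partial_{x^2}\Gamma_{11}{}^1$ varies freely along the fibre of $\tilde{\mathcal{J}}_3\to\tilde{\mathcal{J}}_2$; indeed it depends on $c_{112}$ and $c_{122}$, which may be traded against one another while preserving the single relation of \eqref{E24}. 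Hence $A$ is a non-constant real analytic function on $\tilde{\mathcal{J}}_3$, its non-vanishing locus $\{\rho^{D}\text{ is not symmetric}\}$ is open and dense there, and pulling back along the submersion $\tilde{\mathcal{J}}_k\to\tilde{\mathcal{J}}_3$ produces the required $\mathcal{O}_k$ for every $k\ge3$.

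For Assertion~(3) the governing principle is that every scalar curvature invariant of a locally homogeneous affine surface is constant, so $d\mathcal{E}\ne0$ already forces non-homogeneity; it therefore suffices to arrange $\rho^{D}_s$ non-degenerate and $d\mathcal{E}\ne0$ on an open dense set. From the matrix above, $\rho^{D}_s$ is off-diagonal with entry $-\tfrac12(\partial_{x^1}\Gamma_{22}{}^2+\partial_{x^2}\Gamma_{11}{}^1)$, so $\det\rho^{D}_s=-\tfrac14(\partial_{x^1}\Gamma_{22}{}^2+\partial_{x^2}\Gamma_{11}{}^1)^2$, which by \eqref{E24} is non-zero precisely on the open dense set $\{\phi_{12}\ne0\}$. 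Since the Christoffel symbols are second order in $\phi$, the Ricci tensor is third order, its covariant derivative and hence $\mathcal{E}$ are fourth order, and $d\mathcal{E}$ is fifth order; thus $d\mathcal{E}$ is a well-defined real analytic function on $\tilde{\mathcal{J}}_5$, which accounts for the hypothesis $k\ge5$. It then remains to prove that $\{d\mathcal{E}\ne0\}$ is open and dense, and this non-triviality of $d\mathcal{E}$ is the main obstacle: one must check that $\partial_{x^i}\mathcal{E}$ genuinely involves the fifth derivatives of $\phi$ and is not annihilated by the level-five relations coming from \eqref{E24}. I expect to settle this by exhibiting a single jet (or an explicit solution furnished by Assertion~(1)) at which $d\mathcal{E}\ne0$, in the spirit of the computer-aided computations used elsewhere in the paper; the exclusion $\mu\ne-1$ enters to remove the distinguished eigenvalue of Theorem~\ref{T22}, which is the exceptional value throughout this subject. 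Intersecting the three open dense sets then yields $\mathcal{O}_k$ for $k\ge5$.
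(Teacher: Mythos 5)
Your treatments of Assertions~(1) and~(2) are essentially sound. For Assertion~(1), arguing via the principal symbol $\frac12\mu\phi\,\xi_1\xi_2(\xi_1/\phi_1+\xi_2/\phi_2)$ and a non-characteristic initial line is the same mechanism as the paper's proof, which makes a linear change of coordinates to force $\tilde\alpha_{111}\ne0$ before invoking Cauchy--Kovalevskaya. For Assertion~(2), your argument is a clean variant of the paper's: instead of exhibiting one explicit $3$-jet with $\rho_a^D\ne0$ (as the paper does), you note that Equation~(\ref{E24}) constrains only the combination $c_{112}/c_1+c_{122}/c_2$, so $\rho_a^D(\partial_{x^1},\partial_{x^2})$ restricts to a non-constant affine function on every fibre of $\tilde{\mathcal{J}}_3\rightarrow\tilde{\mathcal{J}}_2$; this handles all eight path components of $\tilde{\mathcal{J}}_3$ uniformly, which the paper's single-point witness technically does not.

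Assertion~(3), however, is where the real content of the theorem lies, and your proposal does not prove it: you explicitly defer the key step (``I expect to settle this by exhibiting a single jet \dots at which $d\mathcal{E}\ne0$''). Everything you do establish --- $\rho_s^D$ is non-degenerate exactly on $\{\phi_{12}\ne0\}$, $d\mathcal{E}$ is a real analytic function on $\tilde{\mathcal{J}}_5$, homogeneity forces scalar invariants to be constant --- is the routine framework; the entire difficulty is to produce a jet in (each component of) $\tilde{\mathcal{J}}_5$, compatible with the relations obtained by differentiating Equation~(\ref{E24}), at which $d\mathcal{E}\ne0$, and to identify precisely where $\mu\ne-1$ enters. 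The paper does this with the ansatz $\phi(x^1,x^2)=\gamma(x^1+x^2)$, which collapses Equation~(\ref{E24}) to the third-order ODE of Equation~(\ref{E26}), solvable with arbitrary data $(\gamma(0),\gamma'(0),\gamma''(0))$; an explicit computation then yields $\mathcal{E}=4(1+\mu)^2(\gamma')^2/(\mu\gamma\gamma'')$ and $\dot{\mathcal{E}}=4(\mu+1)^2\left(\mu\gamma\gamma'\gamma''-(\mu-1)(\gamma')^3\right)/(\mu^2\gamma^2\gamma'')$, which is non-zero for generic initial data precisely because of the factor $(\mu+1)^2$. In particular, your gloss on the hypothesis $\mu\ne-1$ (``to remove the distinguished eigenvalue of Theorem~\ref{T22}'') is not a justification and is somewhat misleading: nothing from Theorem~\ref{T22} is invoked; rather, the witness family is recurrent with $D\rho^D$ proportional to $(1+\mu)$, so for $\mu=-1$ these witnesses have $\mathcal{E}\equiv0$ and the density argument collapses. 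Finally, a single jet only gives density in one of the eight path components of $\tilde{\mathcal{J}}_5$; the paper needs both families $\gamma(x^1+x^2)$ and $\gamma(x^1-x^2)$ to cover them all, a point your strategy overlooks.
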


\begin{proof} We recall the classical Cauchy-Kovalevskaya Theorem. Suppose we are given a relation
in the 3-jets of $\phi$ which is linear once the 2-jets have been fixed and has the form:
\begin{equation}\label{E25a}
\begin{array}{l}
0=\alpha_{111}( \mathcal{J}_2(\phi))\phi_{111}+\alpha_{112}(\mathcal{J}_2(\phi))\phi_{112}+\alpha_{122}(\mathcal{J}_2(\phi))\phi_{122}\\[0.05in]
\phantom{0=}+\alpha_{222}(\mathcal{J}_2(\phi))\phi_{222}+\alpha(\mathcal{J}_2(\phi)).
\end{array}\end{equation}
Assume the coefficient functions are real analytic on some suitable open set of $\mathcal{J}_2$. Suppose that $\alpha_{111}\ne0$. Then given
Cauchy data $f_0(x^2)$, $f_1(x^2)$, and $f_2(x^2)$, there is a unique real analytic solution to Equation~\eqref{E25a} with $\partial_{x^1}^i\phi(0,x^2)=f_i(x^2)$
for $i=0,1,2$. If one expands $\phi$ in a Taylor series, the derivatives $\partial_{x^1}^i\partial_{x^2}^j\phi(0,0)$ can be specified arbitrarily for $i\le2$ and the
remaining Taylor series coefficients are then determined by Equation~(\ref{E25a}). Reinterpreting this in the language we have introduced, this
means that if $\xi\in\tilde{\mathcal{J}}_k$ is given, there exists a solution $\phi$ with $J_k(\phi)=\xi$. This observation is not directly applicable to
the setting at hand since $\alpha_{111}=0$ for our example. But since we have assumed $\phi(0)\ne0$, $\phi_1(0)\ne0$, and $\phi_2(0)\ne0$,
Equation~(\ref{E24}) does provide a non-trivial linear relation amongst the 3-jets of $\phi$ once the 2-jets have been fixed. We now make a linear
change of coordinates to ensure $\tilde\alpha_{111}\ne0$ in the new coordinate system and derive Assertion~(1).

Note that $\rho_{a}^D(\partial_{x^1},\partial_{x^2})$ is a real analytic function on $\tilde{\mathcal{J}}_k$ for any $k\ge3$. 
Thus either it vanishes identically or it does not vanish on an open dense set. We compute
$$
{ \rho_{a}^D(\partial_{x^1},\partial_{x^2})}={ \frac12}\left(\frac{\phi_{22}{ \phi_{12}}-\phi_2\phi_{122}}{\phi_2^2}+\frac{\phi_1 \phi_{112}-\phi_{11} { \phi_{12}}}{\phi_1^2}\right)\,.
$$
If we take
$$\begin{array}{ll}
\phi(0)=1,&\phi_1(0)=\phi_2(0)=1,\\[0.05in]
\phi_{11}(0)=\phi_{12}(0)=\phi_{22}(0)=0,&\phi_{122}=-\phi_{112}=1,
\end{array}$$
then Equation~(\ref{E24}) is satisfied and $ \rho_{a}^D(\partial_{x^1},\partial_{x^2})(0)\ne0$. Assertion~(2) follows.

Similarly, either $d\mathcal{E}$ vanishes identically or $d\mathcal{E}$ is non-zero on an open dense subset of $\tilde{\mathcal{J}}_k$ for any
$k\ge5$. We take $\phi(x^1,x^2)=\gamma(x^1+x^2)$; Equation~(\ref{E24}) becomes:
\begin{equation}\label{E26}
\gamma^{(3)}(t)=-\frac{\gamma'(t)^2\, \gamma''(t)-\mu\,  \gamma (t)\, \gamma''(t)^2}{\mu\,   \gamma (t)\, \gamma'(t)}\,.
\end{equation}
This ODE can be solved with arbitrary initial conditions $\{\gamma(0), \gamma'(0), \gamma''(0)\}$.
We have $\phi_2(0)=\phi_1(0)$ so we obtain 4 of the 8 components of $\tilde{\mathcal{J}}_5$; the other
4 components can be obtained by considering $\gamma(x^1-x^2)$.
A direct calculation shows that this connection is recurrent. We impose the identity of Equation~(\ref{E26}) and compute: 
$$\begin{array}{ll}
\rho^D=\frac{\gamma^{\prime\prime}}{\mu\gamma}\left(\begin{array}{ll}0&1\\1&0\end{array}\right),&
D\rho^D=-\frac{(1+\mu)\gamma^\prime}{\mu\gamma}(dx^1+dx^2)\otimes\rho^D\,,\\[0.2in]
\mathcal{E}={ 4\frac{(1+\mu)^2(\gamma^\prime)^2}{\mu\gamma\gamma^{\prime\prime}}},&
\dot{\mathcal{E}}={ 4}{\frac{(\mu +1)^2 \left(\mu \, \gamma\,\gamma'\,\gamma''-(\mu -1) \,(\gamma')^3\right)}{\mu^2\, \gamma^2\,\gamma''}}\,.
\end{array}$$
Since $\mu\ne-1$, $\dot{\mathcal{E}}$  is non-zero for generic values of $\{\gamma(0),\gamma'(0),\gamma''(0)\}$. Assertion~(3) follows.
\end{proof}

\begin{remark}\rm
Note that $\gamma(t):=t^\mu$ solves Equation~(\ref{E26}). For this choice of the defining function,
$\Gamma_{ii}{}^i=(\mu-1)(x^1+x^2)^{-1}$ and one obtains the example in Definition~\ref{E25}.
\end{remark}

\section{Affine surfaces supporting a parallel nilpotent $(1,1)$-tensor field}\label{S9}

In this section, we give examples of anti-self-dual quasi-Einstein manifolds which do not fit into the classification of Theorem~\ref{th:isotropic}, thus emphasizing the role of the Walker orientation. We will examine a special family of affine surfaces $(\Sigma,D)$ which admit a parallel nilpotent $(1,1)$-tensor field $T$ ($DT=0$, $T^2=0$); we refer to \cite{CLGRGRVL} for details. 
We assume the system of local coordinates $(x^1,x^2)$ is chosen so that: 
$
T\partial_{x^1}=\partial_{x^2}$ and $T\partial_{x^2}=0$.
Then $D T=0$ if and only if we have the relations:
\begin{equation}\label{eq:Christ-T-parallel}
\Gamma_{12}{}^1=0,\qquad\Gamma_{12}{}^2=\Gamma_{11}{}^1,\qquad\Gamma_{22}{}^1=0,\qquad\Gamma_{22}{}^2=0\,.
\end{equation}
Since $\operatorname{ker}T=\operatorname{span}\{\partial_{x^2}\}$ is parallel,  $\partial_{x^2}$ is a geodesic vector field. We have
\[
\rho^D_s= ({ \partial_{x^2}}\Gamma_{11}{}^2-{ \partial_{x^1}}\Gamma_{11}{}^1)\, dx^1\circ dx^1  \,,
\qquad 
\rho_a^D= { \partial_{x^2}}\Gamma_{11}{}^1\, dx^1\wedge dx^2\,.
\]
We shall suppose that  $\partial_{x^2}\Gamma_{11}{}^1=0$ to ensure that $\rho^D$ is symmetric. In this situation the Ricci tensor is recurrent and of rank one.
Work of Wong \cite{wong} shows that the local coordinates $(x^1,x^2)$ can be further specialized so that the only nonzero Christoffel symbol is ${}\Gamma_{11}{}^{2}$ and hence 
\begin{eqnarray*}
&&\rho^D\!=\!\partial_{x^2}\Gamma_{11}{}^2\left(\!\begin{array}{ll}1&\!0\\0&\!0\end{array}\!\right)\!,\;\\
&& D\rho^D\!=\!\left(\partial_{x^1}(\log\partial_{x^2}\Gamma_{11}{}^2) d x^1
+ \partial_{x^2}(\log\partial_{x^2}\Gamma_{11}{}^2) d x^2\right)\otimes\rho^D\!.
\end{eqnarray*}
Thus, with this choice of local coordinates the recurrence $1$-form $\omega$ is given by 
\begin{equation}\label{eq:recurrence-ricci}
\omega=\partial_{x^1}(\log\partial_{x^2}\Gamma_{11}{}^2) d x^1
+ \partial_{x^2}(\log\partial_{x^2}\Gamma_{11}{}^2) d x^2.
\end{equation}
We adopt the notation of Equation~\eqref{eq:metrictensorofriemannextensions}.
The modified Riemannian extension $g_{D,0,T,T} = \iota T \circ \iota T +g_D $ is never self-dual, but it is anti-self-dual if $\omega$  satisfies $\omega(\ker T)=0$ (see \cite{CLGRGRVL} for details). These affine surfaces are strongly 
projectively flat. Although Riemannian extensions of projectively flat connections are locally conformally flat, deformed Riemannian extensions $g_{D,\Phi}$
or modified Riemannian extensions $g_{D,\Phi,T,S}$ are not for generic tensors $\Phi$, $T$ and $S$. Consequently, the following result will show that there
exist examples of anti-self-dual quasi-Einstein Walker metrics that are never self-dual and hence not covered by the classification of
Theorem~\ref{th:isotropic}.

\begin{theorem}
\label{th:anti-self-dual}
Let $(\Sigma,D)$ be an affine surface. Assume that $\rho^D$ is symmetric, that $\operatorname{Rank}\{\rho^D\}=1$, and that
$D\rho^D=\omega\otimes\rho^D$.
Let $T$ be a parallel nilpotent $(1,1)$-tensor field on $\Sigma$, let $\hat f\in\mathcal{C}^\infty(\Sigma)$, and let $f=\pi^\ast\hat f$.
Assume that $d\hat f(\operatorname{ker}T)=0$.
\begin{enumerate}
\item If $\hat f$ satisfies Equation~(\ref{eq:QEE-affine}), then $(T^*\Sigma,g_{D,0,T,T},f,\mu)$ is an isotropic quasi-Einstein manifold with $\lambda=0$.
\smallbreak\item If $\omega(\operatorname{ker}T)=0$, then there exist local coordinates $(x^1,x^2)$ on $\Sigma$
so that the only nonzero Christoffel symbol is given by $\Gamma_{11}{}^2=u(x^1)+x^2v(x^1)$. 
If $\hat f(x^1)$ satisfies  $\hat f''(x^1)-\mu {\hat f'(x^1)}^2+2v(x^1)=0$,
then $(T^*\Sigma,g_{D,0,T,T},f,\mu)$ is an anti-self-dual  quasi-Einstein manifold which is not locally conformally flat.
\end{enumerate}\end{theorem}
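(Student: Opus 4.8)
The plan is to verify the two assertions of Theorem~\ref{th:anti-self-dual} by direct computation in the coordinates adapted to the parallel nilpotent structure, relying on the decomposition of the generalized quasi-Einstein equation into its fibre and base components that was already developed in the proofs of Theorem~\ref{Thm10} and Theorem~\ref{th:isotropic}. For Assertion~(1), since $g_{D,0,T,T}$ is a modified Riemannian extension with $S=T$, $\Phi=0$, and $f=\pi^\ast\hat f$ is a pull-back, I would first observe that $\hat f$ constant on $\ker T$ (the hypothesis $d\hat f(\ker T)=0$) forces the quadratic-in-fibre-coordinate contributions of $\iota T\circ\iota T$ to $df\otimes df$ and to $\operatorname{Hes}_f$ to cancel or vanish. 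Concretely, using Equation~\eqref{eq:Christ-T-parallel} and the vanishing of all Christoffel symbols except $\Gamma_{11}{}^2$, one computes each component $(\operatorname{Hes}_f+\rho-\mu\,df\otimes df)(\cdot,\cdot)$ against the frame $\{\partial_{x^1},\partial_{x^2},\partial_{x_{1'}},\partial_{x_{2'}}\}$; the fibre-fibre and mixed components vanish identically because $f$ is a pull-back and $\hat f$ kills $\ker T$, while the only surviving base-base component reduces to $(\operatorname{Hes}^D_{\hat f}+2\rho^D_s-\mu\,d\hat f\otimes d\hat f)(\partial_{x^i},\partial_{x^j})$, exactly as in the computation at the end of the proof of Assertion~(1) of Theorem~\ref{Thm10}. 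Hence the generalized quasi-Einstein equation with $\lambda=0$ is equivalent to Equation~\eqref{eq:QEE-affine}, giving Assertion~(1).

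For Assertion~(2), I would first install the Wong normal form so that the sole nonzero Christoffel symbol is $\Gamma_{11}{}^2$, and then exploit the recurrence condition $\omega(\ker T)=0$. Since $\ker T=\operatorname{span}\{\partial_{x^2}\}$, this says the $dx^2$-component of $\omega$ in Equation~\eqref{eq:recurrence-ricci} vanishes, i.e. $\partial_{x^2}(\log\partial_{x^2}\Gamma_{11}{}^2)=0$. This is a separability statement: it forces $\partial_{x^2}\Gamma_{11}{}^2$ to be a function of $x^1$ alone, whence integrating in $x^2$ yields $\Gamma_{11}{}^2=u(x^1)+x^2 v(x^1)$ for smooth $u,v$ with $v=\partial_{x^2}\Gamma_{11}{}^2$. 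I would then show that $d\hat f(\ker T)=0$ means $\hat f=\hat f(x^1)$ depends only on $x^1$, and substitute this together with the normal form into Equation~\eqref{eq:QEE-affine}. Since $\rho^D_s=v(x^1)\,dx^1\otimes dx^1$ and $\operatorname{Hes}^D_{\hat f}$ picks up the connection term $-\Gamma_{11}{}^1\partial_{x^1}\hat f=0$ (as $\Gamma_{11}{}^1=0$ in this normalization), the only nontrivial component of Equation~\eqref{eq:QEE-affine} is the $(\partial_{x^1},\partial_{x^1})$-entry, which reads precisely $\hat f''-\mu(\hat f')^2+2v=0$. Thus the stated ODE is exactly the affine quasi-Einstein equation in this setting, and Assertion~(1) then produces a quasi-Einstein manifold with $\lambda=0$.

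It remains to identify the conformal type: that $g_{D,0,T,T}$ is anti-self-dual but not locally conformally flat. For anti-self-duality I would invoke the cited criterion (the result of \cite{CLGRGRVL} quoted just above the theorem) that $g_{D,0,T,T}=\iota T\circ\iota T+g_D$ is anti-self-dual precisely when $\omega(\ker T)=0$, which is exactly our standing hypothesis in Assertion~(2). For the failure of local conformal flatness, I would argue that since these affine surfaces are strongly projectively flat, self-duality of the modified Riemannian extension would by Theorem~\ref{th:isotropic} force the metric into the deformed Riemannian extension form of Assertion~(1) of Theorem~\ref{Thm10}; but the quoted result states $g_{D,0,T,T}$ is \emph{never} self-dual, so the metric cannot be both self-dual and anti-self-dual, hence $W\ne0$ and the manifold is not locally conformally flat. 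Alternatively, and more robustly, I would exhibit a nonzero component of the self-dual Weyl tensor $W^+$ by a direct curvature computation from the metric $g_{D,0,T,T}$ with $\Gamma_{11}{}^2=u+x^2 v$ and $v\ne0$, confirming $W^+\ne0$ while $W^-=0$.

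The step I expect to be the main obstacle is cleanly establishing the non-conformally-flat conclusion: the clean argument via Theorem~\ref{th:isotropic} presupposes the metric is not Ricci flat (so that the dichotomy applies) and that "$g_{D,0,T,T}$ is never self-dual" is available in full force, so I must check that $v\ne0$ (equivalently $\rho^D\ne0$) holds generically for solutions of the ODE, ruling out the degenerate flat case. Verifying that the stated ODE admits genuine solutions with $v\ne0$, and that for these $W^+\ne0$, is where the real content lies; the rest is the bookkeeping of expanding the quasi-Einstein tensor in Walker coordinates, which parallels the computations already carried out in Section~\ref{S5}.
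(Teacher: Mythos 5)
Your proposal is correct and follows essentially the same route as the paper: Wong's normal form in which $\Gamma_{11}{}^2$ is the only nonzero Christoffel symbol, the direct computation showing that the generalized quasi-Einstein tensor on $T^*\Sigma$ has only its $(\partial_{x^1},\partial_{x^1})$ component nonzero and that this component equals $(\operatorname{Hes}^D_{\hat f}+2\rho^D_s-\mu\, d\hat f\otimes d\hat f)(\partial_{x^1},\partial_{x^1})$, the equivalence $\omega(\ker T)=0\Leftrightarrow\partial_{x^2}^2\Gamma_{11}{}^2=0$ yielding $\Gamma_{11}{}^2=u(x^1)+x^2v(x^1)$ and the ODE $\hat f''-\mu(\hat f')^2+2v=0$, with anti-self-duality and the failure of local conformal flatness obtained, exactly as in the paper, by citing \cite{CLGRGRVL} for the facts that $g_{D,0,T,T}$ is anti-self-dual when $\omega(\ker T)=0$ but never self-dual. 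The obstacle you flag at the end is moot: $v=\partial_{x^2}\Gamma_{11}{}^2\neq 0$ is forced by the hypothesis $\operatorname{Rank}\{\rho^D\}=1$ (it is part of the given affine data, not something determined by the ODE), so no genericity check is needed and the detour through Theorem~\ref{th:isotropic} can simply be dropped.
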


\begin{proof}
Choose local coordinates $(x^1,x^2)$ on $\Sigma$ so that $T\partial_{x^1}=\partial_{x^2}$, $T\partial_{x^2}=0$ and so that
the only nonzero Christoffel symbol is $\Gamma_{11}{}^2$. Let $(x^1,x^2, x_{1'},x_{2'})$ be the induced coordinates on $T^*\Sigma$.
Let $\hat{f}\in\mathcal{C}^\infty(\Sigma)$ satisfy $d\hat f(\operatorname{ker}T)=0$. Then:
$$
(\operatorname{Hes}_{f}+\rho-\mu d f\otimes d f)(\partial_{x^1},\partial_{x^1})
=(\operatorname{Hes}^D_{\hat f}+2\rho^D_s-\mu d\hat f\otimes d\hat f)(\partial_{x^1},\partial_{x^1}),
$$
the other components being identically zero. This proves Assertion~(1).

To prove Assertion~(2), we assume that $\omega(\operatorname{ker}T)=0$. 
By Equation~\ref{eq:recurrence-ricci}, we have $\omega(\operatorname{ker}T)=0$ if and only if $\partial_{x^2}^2\Gamma_{11}{}^2=0$.
Consequently, $\Gamma_{11}{}^2$ is a linear function of $x^2$, i.e. $\Gamma_{11}{}^2=u(x^1)+x^2v(x^1)$. Finally, a direct computation shows 
that the only nonzero term in Equation~\eqref{eq:QEE-affine} is
\[
(\operatorname{Hes}^D_{\hat f}+2\rho^D_s-\mu d\hat f\otimes d\hat f)(\partial_{x^1},\partial_{x^1})=\hat{f}''(x^1)-\mu\hat{f}'(x^1)^2+2v(x^1)\,.
\]
This shows that the solutions of the quasi-Einstein equation are determined by the ODE $\hat{f}''(x^1)-\mu\hat{f}'(x^1)^2+2v(x^1)=0$.
\end{proof}

\end{document}